\newcommand{\set}[1]{\left\{#1\right\}}
\newcommand{\norm}[1]{\lVert#1\rVert}
\newcommand{\ind}[1]{\mathbf{1}_{#1}}
\newcommand{\abs}[1]{\left\vert#1\right\vert}
\newcommand{\pr}{\mathsf P}
\newcommand{\mex}[1]{\mathsf{E}\left[\,#1\,\right]}
\newcommand{\sign}{\operatorname{sign}}
\newcommand{\R}{{\mathbb R}}
\newcommand{\F}{\mathcal F}
\newcommand{\wLH}{\widehat L_H^2}
\newcommand{\wotimes}{\mathop{\widehat\otimes}}
\newcommand{\wick}{\mathop{\diamond}}
\newcommand{\pair}[1]{\left\langle #1 \right\rangle}
\numberwithin{equation}{section}
\newtheorem{theorem}{Theorem}[section]
\newtheorem{lemma}{Lemma}[section]
\theoremstyle{remark}
\newtheorem{remark}{Remark}[section]
\newtheorem{example}{Example}[section]
\theoremstyle{definition}
\newtheorem{definition}{Definition}[section]
\journal{Stochastic Processes and their Applications}
\begin{document}
\begin{frontmatter}
\title{Random variables as pathwise integrals with respect to fractional
Brownian motion\tnoteref{T1}}

\tnotetext[T1]{Yuliya Mishura thanks Aalto University School of Science for partial support of this research. Georgiy Shevchenko and Esko Valkeila were partially supported by the Academy of Finland, grant no.\ 21245.}

\author[m1]{Yuliya Mishura}
\ead{myus@univ.kiev.ua}

\author[m1]{Georgiy Shevchenko\corref{c1}}

\cortext[c1]{Corresponding author}
\ead{zhora@univ.kiev.ua}

\author[m2]{Esko Valkeila}

\address[m1]{Kiev National Taras Shevchenko University,
Department of Mechanics and Mathematics,
Volodomirska 60,
01601 Kiev,
Ukraine}

\address[m2]{Aalto University,
Department of Mathematics and Systems Analysis,
P.O. Box 11100,
FI-00076 Aalto,
Finland}

\begin{abstract}
We show that a pathwise stochastic integral with respect to fractional Brownian motion
with an adapted integrand $g$ can have any prescribed distribution, moreover, we give
both necessary and sufficient conditions when  random variables can be represented in this form.
We also prove that any random variable is a value of such integral in some improper sense.
We discuss some applications of these results, in particular, to fractional Black--Scholes model of financial market.
\end{abstract}

\begin{keyword}
fractional Brownian motion \sep
pathwise integral \sep
generalized Lebesgue--Stieltjes integral \sep
arbitrage \sep
replication\sep
divergence integral
\MSC[2010] 60G22 \sep 60H05 \sep 60G15 \sep 91G10
\end{keyword}

\end{frontmatter}

\section{Introduction}

Let $(\Omega, \mathcal F,  \mathsf P )$ be a complete  probability space endowed with a $\mathsf P$-complete left-continuous filtration $\mathbb F =
\set{\mathcal F_t,t\in [0,1]}$.
On this stochastic basis we consider a fractional Brownian motion $ B^H$ with a Hurst parameter $H >  \frac12 $, that is an $\mathbb F$-adapted  centered Gaussian
process with the covariance function
$$
\mex{B^H_t B^H_s} = \frac12 \left( s^{2H} + t^{2H} - | s-t| ^{2H} \right), \ s,t \in [0,1].
$$
Fractional Brownian motion is a popular model for long-range dependence in financial
mathematics, economics and natural sciences.
It is well known that $B^H$ has a continuous modification, and from now on we will assume that this modification is chosen. For more information on
fractional Brownian motion, see \cite{m}.

Dudley \cite{dudley} showed that any functional $\xi$ of a standard Wiener process $W=\set{W_t,t\in[0,1]}$ can be represented as an It\^o stochastic integral $\int_0^1 \psi_t dW_t$, where $\psi$ is adapted to the natural filtration of $W$ and $\int_0^1 \psi_t^2 dt<\infty$ a.s. On the other hand, under an additional assumption $\int_0^1 \mex{\psi_t^2} dt<\infty$, only centered random variables with finite variances can be represented in this form and moreover $\psi$ is unique in this representation.

In this paper we study such  questions for fractional Brownian motion. The integral we use is a (generalized) Lebesgue--Stieltjes integral, which is defined in a pathwise sense. Although the definition of the integral differs from the one for Wiener process, the results are similar in spirit to
those of \cite{dudley}. Precisely, our findings are as follows.
We prove first that for any given distribution $F$ there exists an adapted process $\zeta$ such that $\int_0^1 \zeta_t dB^H_t$
has distribution $F$. Then we show that for any $\mathcal F_1$-measurable random variable $\xi$ there exists an $\mathbb F$-adapted process $g$ such that $\lim_{t\to 1-} \int_0^t g_s dB^H_s = \xi$, i.e. $\xi$ can
be represented as the integral $\int_0^1 g_t dB^H_t$, understood in an improper sense. We moreover show that
$\xi=\int_0^1 g_t dB^H_t$ in a proper sense under additional assumption that $\xi$ is the final value of a H\"older continuous $\mathbb F$-adapted process. In addition, if $g$ is continuous, then this condition is not only sufficient, but also necessary. As a financial implication of these
results, we get that in the fractional Black--Scholes model there exists a strong arbitrage and any contingent claim may be weakly
hedged with an arbitrary hedging cost.

The paper has the following structure. In Section 2 we give some preliminaries on pathwise integration with respect to fBm. In Section 3 the main results are presented. In Section 4 we discuss applications of the results to mathematical finance and
to zero integral problem, from which this research originates.

\section{Preliminaries}

We understand the integral with respect to fractional Brownian motion in a pathwise sense
and define it as the generalized fractional Lebesgue--Stieltjes integral
(see \cite{samko,zahle}). %It is defined as follows.

Consider two  continuous functions  $f$ and $g$, defined on some interval $[a,b]\subset [0,1]$.
For $\alpha\in (0,1)$ define fractional derivatives
\begin{gather*}
\big(D_{a+}^{\alpha}f\big)(x)=\frac{1}{\Gamma(1-\alpha)}\bigg(\frac{f(x)}{(x-a)^\alpha}+\alpha
\int_{a}^x\frac{f(x)-f(u)}{(x-u)^{1+\alpha}}du\bigg)1_{(a,b)}(x),\\
\big(D_{b-}^{1-\alpha}g\big)(x)=\frac{e^{i\pi
\alpha}}{\Gamma(\alpha)}\bigg(\frac{g(x)}{(b-x)^{1-\alpha}}+(1-\alpha)
\int_{x}^b\frac{g(x)-g(u)}{(x-u)^{2-\alpha}}du\bigg)1_{(a,b)}(x).
\end{gather*}
Assume that
 $D_{a+}^{\alpha}f\in L_p[a,b], \ D_{b-}^{1-\alpha}g_{b-}\in
L_q[a,b]$ for some $p\in (1,1/\alpha)$, $q = p/(p-1)$, where $g_{b-}(x) = g(x) - g(b)$.

Under these
assumptions, the generalized Lebesgue--Stieltjes
integral %$\int_a^bf(x)dg(x)$
is defined as
\begin{equation*}\int_a^bf(x)dg(x)=e^{-i\pi\alpha}\int_a^b\big(D_{a+}^{\alpha}f\big)(x)\big(D_{b-}^{1-\alpha}g_{b-}\big)(x)dx.
\end{equation*}
It was shown in \cite{samko} that for any $\alpha\in(1-H,1)$ there exists the fractional derivative
$D_{b-}^{1-\alpha}B^H_{b-}\in L_{\infty}[a,b].$ Hence, for
$f$ with $D_{a+}^\alpha f\in L_1[a,b]$ we can define the integral w.r.t.
fBm according to this formula:
\begin{equation*}
%\label{fBm}
\int_a^bf_sdB^H_s=e^{-i\pi\alpha}\int_a^b\big(D_{a+}^{\alpha}f\big)(x)\big(D_{b-}^{1-\alpha}B^H_{b-}\big)(x)dx.
\end{equation*}

In view of this, we will consider the following norm for $\alpha \in(1-H,1/2)$:
\begin{gather*}
\norm{f}_{1,\alpha,[a,b]} = \int_a^b \left(\frac{\abs{f(s)}}{(s-a)^\alpha} + \int_a^s \frac{\abs{f(s)-f(z)}}{(s-z)^{1+\alpha}}dz\right)ds.
\end{gather*}
For simplicity we will abbreviate $\norm{\cdot}_{\alpha,t} = \norm{\cdot}_{1,\alpha,[0,t]}$. The following estimate for
$t\le 1$ is clear:
\begin{gather*}
\abs{\int_0^t f(s) dB^H_s}\le K_\alpha(\omega) \norm{f}_{\alpha,t},
\end{gather*}
where $K_\alpha(\omega) = \sup_{0\le u<s\le 1} \abs{D_{u-}^{1-\alpha}B^H_{s-}}<\infty$ a.s.

We will need the following version of It\^o formula for fractional Brownian motion. It was proved in \cite{amv} for convex functions $F$, but a careful
analysis of the proof shows that the following result is true.
\begin{theorem}\label{ito}
Let $f:\R\to \R$ be a function of locally bounded variation, $F(x) = \int_0^x f(y) dy$. Then for any $\alpha\in(1-H,1/2)$
\ $\norm{f(B^H_\cdot)}_{\alpha,1}<\infty$ a.s. and
$$
F(B^H_t) = \int_0^t f(B^H_s) dB^H_s.
$$
\end{theorem}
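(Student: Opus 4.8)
The plan is to deduce the general statement from the convex case, which is exactly what \cite{amv} provides, by writing $f$ as a difference of monotone functions and using linearity of the generalized Lebesgue--Stieltjes integral. Note first that no regularity is lost along the way: a convex function on $\R$ is automatically locally Lipschitz, so the primitives introduced below are continuous with locally bounded derivative, and the fractional derivative $D_{0+}^{\al}$ of the corresponding compositions makes sense as soon as the relevant norm is finite.

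Since $f$ has locally bounded variation, write $f = f_1 - f_2$ with $f_1,f_2$ nondecreasing; passing to left-continuous modifications changes each $f_j$ only on an at most countable set $N$ and leaves $F$ unchanged. Set $F_j(x) = \int_0^x f_j(y)\,dy$, so that each $F_j$ is convex with left derivative $f_j$ and $F = F_1 - F_2$. Because $B^H_s$ has a density for every $s>0$, Fubini's theorem gives $\mex{\int_0^1\ind{B^H_s\in N}\,ds} = \int_0^1\pr(B^H_s\in N)\,ds = 0$, hence almost surely $B^H_s\notin N$ for a.e.\ $s\in[0,1]$; a further application of Fubini in the two variables entering the norm shows that modifying $f$ on $N$ affects neither $\norm{f(B^H_\cdot)}_{\al,1}$ nor $\int_0^t f(B^H_s)\,dB^H_s$ almost surely. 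Thus the particular versions of $f_1,f_2$ (and of $f$) are irrelevant.

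Now apply the It\^o formula of \cite{amv} to each convex $F_j$ (localizing on the a.s.\ compact range of $B^H$ over $[0,1]$ if \cite{amv} is stated on a bounded domain): for every $\al\in(1-H,1/2)$ it yields $\norm{f_j(B^H_\cdot)}_{\al,1}<\infty$ a.s.\ and $F_j(B^H_t) = \int_0^t f_j(B^H_s)\,dB^H_s$ for all $t\in[0,1]$. The only point where the proof of \cite{amv} must be re-read is that, although phrased for convex $F$, it actually delivers the formula with integrand $f_j$; since any two one-sided derivatives of $F_j$ differ only on a countable set, this is covered by the previous paragraph. Finally, the triangle inequality for $\norm{\cdot}_{1,\al,[0,1]}$ gives $\norm{f(B^H_\cdot)}_{\al,1}\le\norm{f_1(B^H_\cdot)}_{\al,1}+\norm{f_2(B^H_\cdot)}_{\al,1}<\infty$ a.s., while linearity of the integral in its integrand (evident from the defining formula through fractional derivatives) gives $F(B^H_t) = F_1(B^H_t)-F_2(B^H_t) = \int_0^t f_1(B^H_s)\,dB^H_s - \int_0^t f_2(B^H_s)\,dB^H_s = \int_0^t f(B^H_s)\,dB^H_s$, which completes the proof.

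The substantive analytic work — the finiteness $\norm{f_j(B^H_\cdot)}_{\al,1}<\infty$ and the chain rule for one monotone integrand — is exactly what \cite{amv} supplies, presumably via occupation-density estimates for $B^H$ combined with a mollification of $f_j$; so the main thing to get right here is the bookkeeping of the second paragraph, namely that the countably many exceptional events unite into a single null set and that the choice of one-sided derivative never matters. A self-contained alternative would mollify $f$ directly, invoke the fundamental theorem of calculus for the pathwise integral on the $C^1$ approximants $f_n$, and pass to the limit using $\abs{\int_0^t g\,dB^H_s}\le K_\al(\om)\norm{g}_{\al,t}$; but establishing $\norm{(f_n-f)(B^H_\cdot)}_{\al,1}\to0$ requires the very same occupation-density bounds, so the reduction to \cite{amv} is the economical choice.
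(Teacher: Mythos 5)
The paper does not actually supply a proof of this statement: it only remarks that the result was proved in \cite{amv} for convex $F$ and that a careful analysis of that proof yields the general case. Your argument makes the extension explicit and is correct. The Jordan decomposition $f=f_1-f_2$ into nondecreasing parts (valid on all of $\R$ for locally BV $f$) turns $F$ into a difference of convex functions $F_1-F_2$; the convex case of \cite{amv} applies to each $F_j$; and linearity of the generalized Lebesgue--Stieltjes integral in the integrand (immediate from its definition via $D_{0+}^{\al}$) together with the triangle inequality for $\norm{\cdot}_{1,\al,[0,1]}$ gives both the finiteness of the norm and the identity. The one genuinely delicate point --- that \cite{amv} produces a one-sided derivative of $F_j$ as integrand, which may differ from $f_j$ on a countable set $N$ --- you handle correctly: since $B^H_s$ has a density for every $s>0$, Fubini shows that a.s.\ the time set $\set{s:B^H_s\in N}$ is Lebesgue-null, and both $\norm{\cdot}_{\al,1}$ and the integral (being defined through Lebesgue integrals of fractional derivatives) are unchanged by modifying the integrand on a null set of times. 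Compared with the paper's implicit route (re-running the approximation argument of \cite{amv} directly for BV $f$), your reduction is cleaner and uses the cited convex result strictly as a black box; it buys a short, checkable derivation at the cost of relying on \cite{amv} also delivering the a.s.\ finiteness of the norm for the monotone integrand, which is indeed part of what that reference establishes.
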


Throughout the paper all unimportant constants will be denoted by $C$, and their value may change from line to line. Random
constants will be denoted by $C(\omega)$.

\section{Main results}

\subsection{Auxiliary construction}\label{s:main}

%boundedness???
In this section we construct the essential ingredient of some results: an adapted
process such that w.r.t.\ fractional Brownian motion it integrates (in improper sense) to infinity.

The key fact is the following well-known small ball estimate for fractional Brownian motion (see e.g.\ \cite{lishao}):
there is a
constant $c>0$, independent of $\epsilon $ and $T$, such that
\begin{equation}\label{eq:m-r}
P \left( \sup _{t\in [0,T ] } | B^H _t | < \epsilon \right)
\le e^{-cT \epsilon ^{-1/H}} \ \ \mbox{for } \epsilon \le T^ {H}.
\end{equation}

\begin{lemma}\label{mainlemma}
There exists an $\mathbb F$-adapted process $\varphi=\set{\varphi_t,t\in[0,1]}$ such that
\begin{itemize}
\item For any $t<1$ and $\alpha\in(1-H,1/2)$ \ $\norm{\varphi}_{\alpha,t}<\infty$ a.s., so integral $v_t = \int_0^t \varphi _s dB^H_s $ exists as a generalized Lebesgue--Stieltjes integral.
\item $\lim _{t\to 1-} v_t = \infty $ a.s.
\end{itemize}
\end{lemma}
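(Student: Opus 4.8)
The plan is to build $\varphi$ on a sequence of disjoint subintervals $[t_{n-1},t_n)$ of $[0,1)$ with $t_n\uparrow 1$, so that on each interval the integral $\int_{t_{n-1}}^{t_n}\varphi_s\,dB^H_s$ contributes a fixed amount (say, $+1$) to $v_t$, making $v_{t_n}\to\infty$. On a generic interval $[t_{n-1},t_n)$ I would like to use the It\^o formula of Theorem~\ref{ito}: if we run $\int f_n(B^H_s-B^H_{t_{n-1}})\,dB^H_s$ for a suitable $f_n$ of locally bounded variation, the integral equals $F_n(B^H_s-B^H_{t_{n-1}})$ where $F_n(x)=\int_0^x f_n(y)\,dy$; choosing $F_n$ to be, e.g., a bump-type antiderivative that equals $1$ outside a small neighborhood of $0$ and vanishes at $0$ would give the increment $1$ provided $B^H$ has moved away from $B^H_{t_{n-1}}$ by the end of the interval. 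The obstacle, of course, is the event that $B^H$ stays in a tiny ball around its starting value on $[t_{n-1},t_n)$ — on that event the increment we planned for might not materialize. This is precisely where the small ball estimate \eqref{eq:m-r} enters: it says the probability of $\sup_{t\in[t_{n-1},t_n]}|B^H_t-B^H_{t_{n-1}}|<\eps$ decays like $e^{-c(t_n-t_{n-1})\eps^{-1/H}}$, and by choosing the widths $t_n-t_{n-1}$ and the ball radii $\eps_n$ carefully we can make $\sum_n P(\text{bad on }[t_{n-1},t_n])<\infty$, so by Borel--Cantelli only finitely many intervals are bad a.s.

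More concretely, I would proceed as follows. First, fix sequences $\eps_n\downarrow 0$ and interval lengths $\ell_n=t_n-t_{n-1}$ with $\sum_n\ell_n=1$, subject to $\eps_n\le\ell_n^H$ (so \eqref{eq:m-r} applies) and $\sum_n e^{-c\ell_n\eps_n^{-1/H}}<\infty$; a simple admissible choice such as $\ell_n\sim n^{-2}$ and $\eps_n$ a small negative power of $n$ will work. Second, on each $[t_{n-1},t_n)$ define $\varphi_s=f_n(B^H_s-B^H_{t_{n-1}})$ where $f_n$ is, up to scaling, the derivative of a fixed smooth (or piecewise-linear, hence BV) function $\Phi$ with $\Phi(0)=0$ and $\Phi(x)=1$ for $|x|\ge\eps_n$; the adaptedness of $\varphi$ is immediate since $B^H$ is adapted and $t_{n-1}$ is deterministic. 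Third, by Theorem~\ref{ito} applied to the shifted process $B^H_\cdot - B^H_{t_{n-1}}$ on $[t_{n-1},t_n]$ (the theorem's integrability and formula are invariant under such a deterministic time-dependent recentering within an interval — I would remark on this), we get $\int_{t_{n-1}}^{t}\varphi_s\,dB^H_s=\Phi(B^H_t-B^H_{t_{n-1}})$ for $t\in[t_{n-1},t_n]$, and in particular $\norm{\varphi}_{1,\alpha,[t_{n-1},t_n]}<\infty$ a.s. for each $n$ and each $\alpha\in(1-H,1/2)$.

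It then remains to assemble the pieces. For finiteness of $\norm{\varphi}_{\alpha,t}$ when $t<1$: such $t$ lies in some $[t_{N-1},t_N)$, so only finitely many blocks are involved, and the norm over $[0,t]$ is bounded by the finite sum of the block norms plus cross-terms; I would note that the additive structure of $\norm{\cdot}_{1,\alpha,[a,b]}$ over a partition, together with the a.s.\ finiteness on each block, gives $\norm{\varphi}_{\alpha,t}<\infty$ a.s. (the integrals $\int_0^s|f(s)-f(z)|(s-z)^{-1-\alpha}dz$ across block boundaries are easily controlled since $\varphi$ is bounded and piecewise made of H\"older-continuous pieces of $B^H$). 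For the divergence $v_t\to\infty$: by Borel--Cantelli and the choice of $\ell_n,\eps_n$, almost surely there is a finite random $N(\omega)$ such that for all $n\ge N(\omega)$ we have $\sup_{t\in[t_{n-1},t_n]}|B^H_t-B^H_{t_{n-1}}|\ge\eps_n$, hence $\Phi(B^H_{t_n}-B^H_{t_{n-1}})=1$ and the increment $v_{t_n}-v_{t_{n-1}}=1$ for every $n\ge N(\omega)$; consequently $v_{t_n}\ge (n-N(\omega))\cdot 1 - C(\omega)\to\infty$, where $C(\omega)$ bounds the (finitely many) contributions from the bad blocks and from $0\le v\le 1$ fluctuations, and since $v_t\in[v_{t_{n-1}}-1, v_{t_n}+1]$ on $[t_{n-1},t_n)$ (because $0\le\Phi\le 1$) the full limit $\lim_{t\to1-}v_t=\infty$ follows. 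The main obstacle throughout is the small-ball event, and the whole argument is really about arranging the block sizes so that \eqref{eq:m-r} beats the harmonic-type divergence of $\sum\ell_n$; everything else is bookkeeping with the additivity of the norm and the pathwise It\^o formula.
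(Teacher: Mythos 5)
Your block-by-block architecture (shrinking intervals $[t_{n-1},t_n)$ accumulating at $1$, the pathwise It\^o formula of Theorem~\ref{ito} to evaluate each block's contribution as an antiderivative of the integrand applied to the fBm increment, and Borel--Cantelli with the small-ball estimate \eqref{eq:m-r} to rule out all but finitely many bad blocks) is exactly the paper's. But there is one genuine gap at the point where you convert the Borel--Cantelli conclusion into a guaranteed gain of $1$ per block. Borel--Cantelli applied to \eqref{eq:m-r} gives you that, for all large $n$, $\sup_{t\in[t_{n-1},t_n]}\abs{B^H_t-B^H_{t_{n-1}}}\ge\eps_n$; from this you write ``hence $\Phi(B^H_{t_n}-B^H_{t_{n-1}})=1$''. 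That implication is false: the path can leave the $\eps_n$-ball in the interior of the block and return to within $\eps_n$ of its starting value by time $t_n$, in which case your un-stopped integral $\int_{t_{n-1}}^{t_n}\varphi_s\,dB^H_s=\Phi(B^H_{t_n}-B^H_{t_{n-1}})$ can be anywhere in $[0,1]$, including $0$. The supremum over the block controls nothing about the endpoint value, and with the choices of $\ell_n,\eps_n$ forced by \eqref{eq:m-r} the endpoint probability $\pr\left(\abs{B^H_{t_n}-B^H_{t_{n-1}}}<\eps_n\right)\sim\eps_n\ell_n^{-H}$ need not be summable, so you cannot silently replace the supremum by the endpoint.

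The paper closes exactly this hole by stopping the integrand at the first exit time $\tau_n=\min\set{t\ge t_{n-1}:\abs{B^H_t-B^H_{t_{n-1}}}\ge\eps_n}\wedge t_n$: the integrand is $f_\beta(B^H_t-B^H_{t_{n-1}})\ind{[t_{n-1},\tau_n)}(t)$, so the It\^o formula evaluates the antiderivative at the exit position $\pm\eps_n$ and the gain ($\eps_n^{1+\beta}=1/n$ in the paper, with $\sum_n 1/n=\infty$) is locked in precisely on the event that the supremum exceeds $\eps_n$ --- which is what the small-ball estimate actually controls. (The paper also uses $F(x)=\abs{x}^{1+\beta}$ with gains $1/n$ rather than a bump with gain $1$; this keeps $\varphi$ uniformly bounded, but that difference is cosmetic for this lemma.) Two repairs of your argument are available: either insert the stopping time $\tau_n$ and switch $\varphi$ off on $[\tau_n,t_n)$ (adaptedness survives since $\tau_n$ is a stopping time, and the rest of your bookkeeping goes through), or keep the un-stopped integrand but replace the small-ball argument by the elementary Gaussian bound $\pr\left(\abs{B^H_{t_n}-B^H_{t_{n-1}}}<\eps_n\right)\le C\eps_n\ell_n^{-H}$ and choose $\eps_n$ small enough that this is summable (which then no longer needs \eqref{eq:m-r} at all). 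As written, however, the divergence step does not go through.
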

\begin{proof}
Fix arbitrary  $\gamma\in(1,1/H)$ and $\beta \in(0,\frac{1}{\gamma H} -1)$.
Denote $\Delta_n = n^{-\gamma}/\zeta(\gamma)$, $\zeta(\gamma) = \sum_{n\ge 1}n^{-\gamma}$,
and define $t_0=0$, $t_n = \sum_{k=1}^{n} \Delta_k$, $n\ge 1$, so that $t_n\to 1-$, $n\to\infty$.
Denote also $f_\beta(x) = (1+\beta) x^\beta \sign x$, so that $\int_0^x f_\beta(z) dz = \abs{x}^{1+\beta}$, $x\in\mathbb R$.

Let $\tau_n = \min\set{t\ge t_{n-1}: \abs{B_t^H - B_{t_{n-1}}^H}\ge n^{-1/(1+\beta)}}\wedge t_{n}$ and define
$$
\varphi_t = \sum_{n=1}^\infty f_\beta(B^H_t - B^H_{t_n})\ind{[t_{n-1},\tau_{n})}(t).
$$
%$\Delta B^H_{n} = B^H_{t_{n}} - B^H_{t_{n-1}}$, $n\ge 1$.
%$f_\beta(x) = (1+a) x^a\ind{[0,1]}(x)$, $F_\beta(x) = \int_0^x f_\beta(y)dy = (0\vee x)^{1+a}\wedge 1$,
First we establish estimate $\norm{\varphi}_{\alpha,t}<\infty$ a.s. To that end, note that fractional Brownian
motion $B^H$ is almost surely bounded on $[0,1]$ and write
$
{\norm{\varphi}_{\alpha,t_n}} = I_1 + I_2,
$
where
\begin{gather*}
I_1 = \int_0^{t_n} {{\abs{\varphi_s}}{s^{-\alpha}}}ds \le C(\omega),\\
I_2 = \int_0^t \int_0^s {{\abs{\varphi_s-\varphi_u}}{(s-u)^{-1-\alpha}}}du\,ds\\
 = \sum_{k=1}^{n} \int_{t_{k-1}}^{t_{k}} \left( \int_0^{t_{k-1}} +\int_{t_{k-1}}^s \right){{\abs{\varphi_s-\varphi_u}}{(s-u)^{-1-\alpha}}}du\,ds.
\end{gather*}
Now estimate
\begin{gather*}
\sum_{k=1}^{n} \int_{t_{k-1}}^{t_{k}} \int_0^{t_{k-1}}{{\abs{\varphi_s-\varphi_u}}{(s-u)^{-1-\alpha}}}du\,ds\\
\le C(\omega)\sum_{k=1}^{n} \int_{t_{k-1}}^{t_{k}} \int_0^{t_{k-1}}{(s-u)^{-1-\alpha}}du\,ds\\
\le C(\omega) \sum_{k=1}^{n} \int_{t_{k-1}}^{t_{k}} {(t-t_{k-1})^{-\alpha}}ds \le C(\omega)\sum_{k=1}^{n} \Delta_k^{1-\alpha} <\infty.
\end{gather*}
Finally,
\begin{gather*}
\int_{t_{k-1}}^{t_{k}} \int_{t_{k-1}}^s {\abs{\varphi_s-\varphi_u}}{(s-u)^{-1-\alpha}}du\,ds\\
=  \int_{\tau_{k}}^{t_{k}} \int_0^{\tau_{k}} \abs{f_\beta(B^H_u - B_{t_k}^H)}(s-u)^{-1-\alpha}du\,ds + I_k\\
%+\int_{t_k}^{\tau_k} \int_{t_k}^{s} {{\abs{f_\beta(B^H_s - B_{t_k}^H)-f_\beta(B^H_u - B_{t_k}^H)}}{(s-u)^{-1-\alpha}}}du\,ds\\
\le C(\omega) \int_{\tau_{k-1}}^{t_{k}} (s-\tau_k)^{-\alpha} ds + I_k\le C(\omega) + I_k,
\end{gather*}
where
$$
I_k = \int_{t_{k-1}}^{\tau_k} \int_{t_{k-1}}^{s} {{\abs{f_\beta(B^H_s - B_{t_k}^H)-f_\beta(B^H_u - B_{t_k}^H)}}{(s-u)^{-1-\alpha}}}du\,ds
$$
is finite almost surely by Theorem \ref{ito}.

Now by the It\^o formula, for $t\in[t_{n-1},t_{n})$
$$
v_t = \int _0^t \varphi _s dB^H_s = \sum_{k=1}^{n-1} \abs{\Delta B^H_k}^{1+\beta} + \abs{B^H_{t\wedge \tau_n} - B^H_{t_{n-1}}}^{1+\beta},
$$
where $\Delta B^H_{k} = B^H_{\tau_{k}} - B^H_{t_{k-1}}$, $k\ge 1$. It is easy to see that $v_t\ge v_{t_n}$ for $t\ge t_n$, so in order to prove that $v_t\to \infty$, $t\to 1$, it is enough to show that $v_{t_n}\to \infty$, $n\to \infty$, which in turn is equivalent to
$\sum_{n=1}^\infty\abs{\Delta B^H_n}^{1+\beta}=\infty$. Observe that $\abs{\Delta B^H_n}^{1+\beta}\ge 1/n$ provided that
$\tau_n<t_{n}$. Therefore, defining $A_n = \set{\sup_{t\in [t_{n-1},t_{n}]} \abs{B^H_t -B^H_{t_{n-1}}}< n^{-1/(1+\beta)}}$, $n\ge 1$,
it is enough to show that almost surely only finite number of the events $A_n$ happens.
Using the small ball estimate \eqref{eq:m-r} and stationarity of increments of $B^H$, we obtain
\begin{gather*}
\pr(A_n) = \pr\left( \sup_{t\in [0,\Delta_n]} \abs{B^H_t}< n^{-1/(1+\beta)} \right) \le \exp\set{-c \zeta(\gamma)^{-1} n^{-\gamma+\frac{1}{H(1+\beta)}}},
\end{gather*}
so $\sum_{n\ge 1} P(A_n)<\infty$ since $\frac{1}{H(1+\beta)}>\gamma$. Thus, we get the desired statement from the Borel-Cantelli lemma.
\end{proof}
\begin{remark}
It is easy to see  that $\norm{\varphi}_{\alpha,t}<\infty$ even for a random $t=t(\omega)<1$.
\end{remark}

\subsection{Stochastic integral with respect to fBm can have any distribution}
The following result is about representation, not of a random variable, but rather of a distribution. From the financial point of view, it means that an investor can get any desired risk profile, using a self-financing portfolio (see Theorem~\ref{distrfmthm}). The key for its proof is Lemma~\ref{mainlemma}, the rest of the proof goes exactly as in \cite{dudley}.
\begin{theorem}\label{distrthm}
For any distribution function $F$ there exists an adapted process $\zeta$ such that
$\norm{\zeta}_{\alpha,1}<\infty$ and the distribution function of $\int_0^1 \zeta_s dB^H_s$ is $F$.
\end{theorem}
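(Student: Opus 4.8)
The plan is to mimic Dudley's construction, using Lemma~\ref{mainlemma} as the engine for creating a random variable that escapes to $\infty$. First I would reduce to a convenient normal form: pick a measurable map $G:\R\to\R$ with $G(N)\sim F$ when $N$ is standard normal (for instance $G = F^{-1}\circ\Phi$), so it suffices to construct an adapted $\zeta$ with $\norm{\zeta}_{\alpha,1}<\infty$ such that $\int_0^1\zeta_s\,dB^H_s = G(\eta)$ for some standard normal $\eta$ that is already realized as a pathwise integral on an initial subinterval. Concretely, on $[0,1/2]$ one can take $\zeta_s = h(s)$ for a deterministic square-integrable-in-the-$\norm{\cdot}_{\alpha}$-sense kernel so that $\eta := \int_0^{1/2} h(s)\,dB^H_s$ is $\mathcal F_{1/2}$-measurable and standard normal; existence of such $h$ follows from the fact that Gaussian integrals $\int_0^{1/2} h\,dB^H$ have positive variance for nonzero continuous $h$ and the variance scales continuously, so $h$ can be normalized.

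Next, on the remaining interval $[1/2,1]$ I would run a rescaled copy of the process $\varphi$ from Lemma~\ref{mainlemma} to generate a random variable that tends to $\pm\infty$, and then stop it at the first time its running integral hits the target level $G(\eta)$. Precisely, let $\widetilde\varphi$ be the process from Lemma~\ref{mainlemma} built on $[1/2,1]$ (by the obvious affine time change $t\mapsto 2t-1$, or directly; the remark after the lemma guarantees $\norm{\widetilde\varphi}_{\alpha,t}<\infty$ up to random times $t<1$), so that $w_t := \int_{1/2}^t \widetilde\varphi_s\,dB^H_s$ satisfies $w_{1/2}=0$, is continuous in $t$, and $w_t\to+\infty$ as $t\to1-$ almost surely. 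Define the stopping time $\sigma = \inf\set{t\ge 1/2 : \eta + w_t = G(\eta)}\wedge 1$; since $w$ is continuous, starts at $0$, and increases to $+\infty$, on the event $\set{G(\eta)\ge\eta}$ we have $\sigma<1$ and $\eta + w_\sigma = G(\eta)$. To handle $\set{G(\eta)<\eta}$, I would symmetrize: use $\pm\widetilde\varphi$ depending on the sign of $G(\eta)-\eta$, i.e.\ replace $\widetilde\varphi_s$ by $\sign(G(\eta)-\eta)\,\widetilde\varphi_s$, which is still adapted since the sign is $\mathcal F_{1/2}$-measurable; then $w_\sigma = \abs{G(\eta)-\eta}$ and $\eta + \sign(G(\eta)-\eta)w_\sigma = G(\eta)$ on $\sigma<1$. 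Setting $\zeta_s = h(s)\ind{[0,1/2]}(s) + \sign(G(\eta)-\eta)\,\widetilde\varphi_s\ind{[1/2,\sigma)}(s)$ gives an adapted process with $\int_0^1\zeta_s\,dB^H_s = G(\eta)$ a.s.

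The main obstacle, and the step needing the most care, is verifying $\norm{\zeta}_{\alpha,1}<\infty$ almost surely despite the stopping. Lemma~\ref{mainlemma} and its remark give $\norm{\widetilde\varphi}_{\alpha,t}<\infty$ for any random $t<1$, but here $\sigma$ can equal $1$ on a null-probability-free-of-concern event only if $G(\eta)=\eta$ exactly, which happens with probability zero unless $F$ is degenerate; more importantly, even when $\sigma<1$ a.s., one must check that truncating $\widetilde\varphi$ at $\sigma$ does not destroy the norm bound. This is where I would argue that $\zeta$ restricted to $[1/2,\sigma)$ agrees with $\widetilde\varphi$ multiplied by an indicator of a stochastic interval with a continuous endpoint, and that the generalized Lebesgue--Stieltjes norm of such a truncation is dominated by $\norm{\widetilde\varphi}_{\alpha,\sigma}$ plus boundary terms coming from the jump of the indicator at $\sigma$, all of which are finite because $\widetilde\varphi$ and $B^H$ are bounded near $\sigma<1$; the $f_\beta$-type cancellations in the double integral that were exploited in the proof of Lemma~\ref{mainlemma} carry over verbatim on each dyadic-type block up to $\sigma$. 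Once the norm is controlled, the Itô formula (Theorem~\ref{ito}) legitimizes the identity $w_t = \abs{B^H_{t\wedge\tau_n}-\cdots}^{1+\beta}+\dots$ exactly as in Lemma~\ref{mainlemma}, the stopping at $\sigma$ is consistent with pathwise integration, and the distributional claim $\int_0^1\zeta\,dB^H \sim F$ follows since $\eta\sim N(0,1)$ and $G(\eta)\sim F$ by construction.
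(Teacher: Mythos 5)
Your proposal is correct and follows essentially the same route as the paper: reduce to hitting a target of the form (monotone function of a Gaussian), then run the blow-up process of Lemma~\ref{mainlemma} on $[1/2,1]$ with the appropriate sign and stop it when the running integral reaches the target, checking the $\norm{\cdot}_{\alpha,1}$-norm of the truncated integrand exactly as you describe. The only difference is cosmetic: the paper takes the target to be $g(B^H_{1/2})$ directly and sets $\zeta=0$ on $[0,1/2]$, so your extra stage of realizing a standard normal $\eta$ as $\int_0^{1/2}h\,dB^H_s$ (and your worry about $\sigma=1$ when $G(\eta)=\eta$, where in fact $\sigma=1/2$) is unnecessary, though harmless.
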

\begin{proof}
It is clear that there exists a non-decreasing function $g:\R\to\R$ such that $g(B^H_{1/2})$ has distribution $F$. So it is enough to construct
an adapted process  $\zeta$  such that $\int_0^1 \zeta_s dB^H_s = g(B^H_{1/2})$. Let $\varphi$ be the process constructed in Lemma~\ref{mainlemma}, $v_t = \int_{1/2}^t \varphi_s dB^H_s$. Define $\tau = \min\set{t\ge 1/2:v_t = \abs{g(B^H_{1/2})}}$. Since $v_t\to\infty$ as $t\to 1-$ a.s.,
we have $\tau<1$ a.s. Now put $$\zeta_t = \varphi_t\sign g(B^H_{1/2}) \mathbf 1_{[1/2,\tau]}(t).$$
We have
\begin{gather*}
\norm{\zeta}_{\alpha,1} = \norm{\varphi}_{\alpha,\tau} + \int_\tau^1\int_0^\tau \frac{\abs{\zeta_s}}{(t-s)^{1+\alpha}}ds\,dt\le C(\omega) +
C(\omega)\int_\tau^1 (t-\tau)^{-\alpha}dt <\infty.
\end{gather*}
Obviously,
$$
\int_0^1 \zeta_s dB^H_s = g(B^H_{1/2}),
$$
whence the statement follows.
\end{proof}
\begin{remark}
Observe that the process $\zeta$ is adapted not only to the filtration $\mathbb F$, but also to the natural filtration of $B^H$; moreover, this process is piecewise H\"older continuous, which implies that the integral is not only well-defined in the generalized Lebesgue--Stieltjes sense, but it is also a limit of integral sums.
\end{remark}

\subsection{Any random variable is a stochastic integral in improper sense}
\begin{theorem}\label{thmimproper}
For any $\mathcal F_1$-measurable variable $\xi$ there exists an $\mathbb F$-adapted process $\psi$ such that
\begin{itemize}
\item For any $t<1$ and $\alpha\in(1-H,1/2)$ \ $\norm{\psi}_{\alpha,t}<\infty$ a.s.
\item $\lim_{t\to 1 } \int_0^t \psi_s dB^H_s = \xi $ a.s.
\end{itemize}
\end{theorem}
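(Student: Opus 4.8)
The plan is to reduce the improper-integral representation to the already-established Lemma~\ref{mainlemma} by a splicing argument on a sequence of subintervals shrinking to $1$. First I would fix a sequence $s_n\uparrow 1$, say $s_n = 1 - 2^{-n}$, and approximate $\xi$ by a sequence of random variables $\xi_n$ with $\xi_n$ being $\mathcal F_{s_n}$-measurable, $\xi_0 = 0$, and $\xi_n \to \xi$ a.s.\ (for instance, by conditioning: $\xi_n = \mex{\xi \mid \mathcal F_{s_n}}$ when $\xi$ is integrable, or more robustly $\xi_n = \arctan$-truncation composed with conditional expectations, then untransformed, so that convergence holds for arbitrary $\mathcal F_1$-measurable $\xi$ without integrability; in fact the cleanest choice is to write $\xi = \lim_n \xi_n$ with $\xi_n$ simple $\mathcal F_{s_n}$-measurable, which always exists by left-continuity of $\mathbb F$ and a.s.\ convergence of martingale-type approximations). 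The key point is only that such an adapted approximating sequence converging a.s.\ exists.

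Next, on each interval $[s_n, s_{n+1}]$ I would run the construction from Theorem~\ref{distrthm} to ``hit'' the increment $\xi_{n+1} - v_{s_n}$, where $v$ denotes the integral built so far. Concretely, let $\varphi^{(n)}$ be a copy of the process from Lemma~\ref{mainlemma} rescaled to live on $[s_n, s_{n+1}]$ (so its improper integral over that interval tends to $+\infty$ as $t\to s_{n+1}-$); set $w^{(n)}_t = \int_{s_n}^t \varphi^{(n)}_u dB^H_u$ and define the stopping time $\sigma_n = \min\set{t \ge s_n : w^{(n)}_t = \abs{\xi_{n+1} - v_{s_n}}} \wedge s_{n+1}$, which is strictly less than $s_{n+1}$ a.s.\ because $w^{(n)}_t\to\infty$. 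Then on $[s_n,\sigma_n]$ put $\psi_t = \varphi^{(n)}_t \sign(\xi_{n+1}-v_{s_n})$ and $\psi_t = 0$ on $[\sigma_n, s_{n+1}]$, so that $\int_{s_n}^{s_{n+1}} \psi_u dB^H_u = \xi_{n+1} - v_{s_n}$, i.e.\ $v_{s_{n+1}} = \xi_{n+1}$. Gluing these pieces together over all $n$ defines $\psi$ on $[0,1)$, and by construction $v_{s_n} = \xi_n \to \xi$ a.s.

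It then remains to verify the two claimed properties. For the first, fix $t < 1$; then $t \le s_N$ for some $N$, and $\norm{\psi}_{\alpha,t}$ is bounded by a finite sum of terms, each of which is either a norm $\norm{\varphi^{(n)}}_{\alpha,\cdot}$ on a subinterval (finite a.s.\ by Lemma~\ref{mainlemma} and its remark, since the relevant endpoints $\sigma_n < s_{n+1}$ are strictly interior) plus cross-terms of the form $\int_{s_n}^{s_{n+1}}\int_0^{s_n}\frac{\abs{\psi_s}}{(t-s)^{1+\alpha}}ds\,dt$, which are finite exactly as in the proof of Theorem~\ref{distrthm}. For the second property, I must upgrade convergence along the subsequence $s_n$ to convergence of the full family $\int_0^t \psi_s dB^H_s$ as $t \to 1-$: for $t \in [s_n, s_{n+1}]$ one has $\int_0^t\psi_s dB^H_s = \xi_n + \int_{s_n}^t \psi_s dB^H_s$, and the last term is either $0$ (if $t \ge \sigma_n$) or equals $w^{(n)}_t\,\sign(\xi_{n+1}-v_{s_n})$ with $\abs{w^{(n)}_t} \le \abs{\xi_{n+1} - \xi_n}$ by the definition of $\sigma_n$; hence the oscillation on $[s_n, s_{n+1}]$ is at most $\abs{\xi_{n+1}-\xi_n}\to 0$.

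I expect the main obstacle to be precisely this last step: controlling the integral between the grid points $s_n$ uniformly, so that no overshoot occurs and the path of $t\mapsto\int_0^t\psi_s dB^H_s$ genuinely converges rather than merely converges along $\{s_n\}$. The monotonicity structure of $w^{(n)}$ (it increases in absolute value, as in Lemma~\ref{mainlemma}) is what makes the bound $\abs{w^{(n)}_t}\le\abs{\xi_{n+1}-v_{s_n}}$ automatic and resolves this; a secondary technical point is choosing the approximating sequence $\xi_n$ so that it works for an arbitrary (possibly non-integrable) $\mathcal F_1$-measurable $\xi$, which is handled by a truncation-and-conditioning argument rather than plain conditional expectations.
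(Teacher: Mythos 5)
Your proposal is correct and follows essentially the same route as the paper: approximate $\xi$ by an adapted sequence via the $\tan$--$\arctan$ conditional-expectation trick, splice copies of the process from Lemma~\ref{mainlemma} on a grid of subintervals shrinking to $1$, stop each copy at the first time it hits the required increment, and control the oscillation between grid points using the fact that the partial integral stays between consecutive approximants. The only (immaterial) difference is indexing the target increment as $\xi_{n+1}-v_{s_n}$ rather than $\xi_n-\xi_{n-1}$, which coincides once $v_{s_n}=\xi_n$ is established inductively.
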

\begin{proof}
Process $z_t = \tan \mathbb E [\arctan \xi\vert \mathcal F_{t}]$
is  $\mathbb F$-adapted and $z_t\to \xi$, $t\to 1-$, a.s.\ by the left continuity of $\mathbb F$ and the martingale convergence.

Let $\{t_n,n\ge 1\}$ be arbitrary increasing sequence of points from $[0,1]$ converging to $1$.

By Lemma~\ref{mainlemma}, there exists  an $\mathbb F$-adapted process $\varphi^n$ on $[t_n,t_{n+1}]$ such that $v^n_t = \int_{t_n}^t
\varphi_s^n dB^H_s \to +\infty$, $t\to t_{n+1}-$.

Now denote $\xi_n = z_{t_n}$ and $\delta_n = \xi_{n} - \xi_{n-1}$, $n\ge 2$, $\delta_1 = \xi_1$. Take $\tau_n = \min\set{t\ge t_n: v^n_{t} = \abs{\delta_n}}$ and define
$$
\psi_t = \sum_{n\ge 1} \varphi_t^n \ind{[t_{n},\tau_n]}(t)\sign \delta_n,\quad x_t = \int_0^t \psi_s dB^H_s.
$$
The finiteness of norm $\norm{\psi}_{\alpha,t}$ for $t<1$ is proved as in Lemma \ref{mainlemma} and Theorem \ref{distrthm}.
It is clear that $x_{t_{n+1}} = \sum_{k=1}^n \delta_n = \xi_n$, so $x_{t_{n+1}}\to \xi$, $n\to\infty$. Moreover, from the construction of process $\psi$ it follows that for $t\in[t_n, t_{n+1}]$ the value $x_t$ is between $\xi_{n-1}$ and $\xi_n$, whence $x_t\to \xi, t\to 1-$.
\end{proof}

\subsection{Which variables can be represented as stochastic integrals}
For some random variables $\xi$ we can claim even more: the existence of an $\mathbb F$-adapted $g_t$ such that $\int_0^1 g_s dB^H_s$ is well-defined and is equal to $\xi$.
To establish the main result here, we need an auxiliary lemma.
\begin{lemma}\label{probfbm<0}
For $0< s\le t\le 1$ \ \begin{equation}
\label{probest}
\mathsf P(B^H_sB^H_t\le 0)\le C(t-s)^H t^{-H}.
\end{equation}
\end{lemma}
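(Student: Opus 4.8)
The plan is to estimate the probability that $B^H_s$ and $B^H_t$ have opposite signs by conditioning on $B^H_t$ and using the Gaussian structure of the pair $(B^H_s, B^H_t)$. The event $\{B^H_sB^H_t\le 0\}$ is the union of $\{B^H_t\ge 0, B^H_s\le 0\}$ and $\{B^H_t\le 0, B^H_s\ge 0\}$; by symmetry of fractional Brownian motion it suffices to bound twice the first one. Writing $B^H_s = \mex{B^H_s\mid B^H_t} + \eta$ where $\eta$ is a centered Gaussian independent of $B^H_t$, the conditional mean is $\rho\, B^H_t$ with $\rho = \mex{B^H_sB^H_t}/\mex{(B^H_t)^2} = \big(s^{2H}+t^{2H}-(t-s)^{2H}\big)/(2t^{2H})\ge 0$, so on $\{B^H_t\ge 0\}$ the conditional mean of $B^H_s$ is nonnegative and $\{B^H_s\le 0\}$ forces $\eta \le -\rho B^H_t \le 0$.

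The key step is then to observe that, given $B^H_t = x \ge 0$, we have $\pr(B^H_s\le 0\mid B^H_t = x)\le \pr(\eta\le -\rho x)$, and for a centered Gaussian $\eta$ with variance $\sigma^2 = \operatorname{Var}(B^H_s\mid B^H_t)$ this probability is at most $\tfrac12 e^{-\rho^2 x^2/(2\sigma^2)}$ (or, more simply, one can use that $\pr(\eta \le -a) \le \sigma/(a\sqrt{2\pi})$ for $a>0$). Integrating against the density of $B^H_t$, a Gaussian with variance $t^{2H}$, and exploiting that $1-\rho = (t-s)^{2H}/(2t^{2H})$ is small when $t-s$ is small, one extracts the factor $(t-s)^H t^{-H}$. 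Concretely, $1-\rho$ being of order $(t-s)^{2H}t^{-2H}$ and $\sigma^2$ being comparable to $s^{2H}(1-\rho^2)$, the Gaussian tail bound produces a contribution scaling like $\sqrt{1-\rho}\sim (t-s)^H t^{-H}$ after the integration, since the bulk of the mass of $B^H_t$ sits at scale $t^H$.

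I would organize the computation as follows: first record the explicit formulas for $\rho$ and for $\sigma^2 = \mex{(B^H_s)^2} - \rho\,\mex{B^H_sB^H_t} = s^{2H}(1-\rho^2)$ — or bound $\sigma^2 \le s^{2H} \le t^{2H}$ crudely; second, bound $\pr(B^H_sB^H_t\le 0) \le 2\,\mex{\pr(\eta \le -\rho|B^H_t|\,\mid B^H_t)} \le 2\,\mex{\min(1/2,\ \sigma/(\rho|B^H_t|\sqrt{2\pi}))}$; third, split the expectation over $\{|B^H_t|\le \sigma/\rho\}$ and its complement, bounding the first region by $\tfrac12\pr(|B^H_t|\le \sigma/\rho) \le C\sigma/(\rho t^H)$ and the second by $C\sigma/(\rho t^H)$ as well via $\mex{|B^H_t|^{-1}\ind{|B^H_t|\ge a}} \le C/(a)\cdot\pr(|B^H_t|\ge a)^{?}$—actually more cleanly $\mex{|B^H_t|^{-1}\wedge c} \le C(c + t^{-H})$—so that in total $\pr(B^H_sB^H_t\le 0)\le C\sigma/(\rho t^H) \le C t^{-H}\sqrt{1-\rho^2}\cdot(1/\rho)$. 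Finally, using $1-\rho^2 = (1-\rho)(1+\rho)\le 2(1-\rho) = (t-s)^{2H}t^{-2H}$ and $\rho\ge 1/2$ when $t-s$ is small (while the bound is trivial, with constant adjustment, when $t-s$ is comparable to $t$), this last expression is $\le C(t-s)^H t^{-H}$, which is exactly \eqref{probest}. The main obstacle is the clean handling of the negative moment $\mex{|B^H_t|^{-1}}$, which is genuinely finite only because $B^H_t$ is a nondegenerate Gaussian; I would dispatch it by comparing $B^H_t/t^H$ to a standard normal, reducing to the elementary fact that $\mex{|N|^{-1}\wedge c}$ is finite and bounded by $C(1+c)$ for a standard normal $N$, so that the bound $C\sigma/(\rho t^H)$ holds uniformly.
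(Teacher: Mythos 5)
Your overall architecture (condition on $B^H_t$, write $B^H_s=\rho B^H_t+\eta$ with $\eta\perp B^H_t$, apply a Gaussian tail bound, integrate against the density of $B^H_t$) is a legitimate alternative to the paper's route, which instead uses self-similarity to reduce to $t=1$ and then bounds the explicit bivariate density pointwise by $e^{-(x-y)^2/2}$ on the quadrant, pulling the factor $(1-\rho^2)^{1/2}$ out of the normalizing constant. But your proposal contains a genuine error at exactly the step where the exponent $H$ is supposed to come from. With your $\rho=\mex{B^H_sB^H_t}/t^{2H}$ (the \emph{regression} coefficient), one has
$$1-\rho=\frac{t^{2H}-s^{2H}+(t-s)^{2H}}{2t^{2H}},$$
not $(t-s)^{2H}/(2t^{2H})$. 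The extra term $t^{2H}-s^{2H}$ is of order $(t-s)\,t^{2H-1}$, and since $H>\tfrac12$ this \emph{dominates} $(t-s)^{2H}$ when $t-s\ll t$. Consequently $\sqrt{1-\rho}\asymp\sqrt{(t-s)/t}$, which is strictly weaker than the target $((t-s)/t)^H$, and the chain ``$1-\rho^2\le 2(1-\rho)=(t-s)^{2H}t^{-2H}$'' is false. The identity you want holds for the \emph{correlation} coefficient $r=\mex{B^H_sB^H_t}/(s^Ht^H)$, for which $1-r=\big((t-s)^{2H}-(t^H-s^H)^2\big)/(2s^Ht^H)\le (t-s)^{2H}/(2s^Ht^H)$ — this is precisely the computation in the paper. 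The cleanest repair of your argument bypasses $\rho$ entirely: since the conditional variance minimizes the mean-square error over linear predictors, $\sigma^2=\operatorname{Var}(B^H_s\mid B^H_t)\le \mex{(B^H_s-B^H_t)^2}=(t-s)^{2H}$, so $\sigma\le(t-s)^H$ outright; combined with $\rho\ge\tfrac12$ for $s\ge t/2$ (and the trivial bound for $s<t/2$), your final estimate $C\sigma/(\rho t^H)\le C(t-s)^Ht^{-H}$ then goes through.

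A secondary problem: if you use the linear tail bound $\pr(\eta\le-a)\le\sigma/(a\sqrt{2\pi})$ together with the proposed splitting, the second region requires $\mex{\abs{N}^{-1}\ind{\abs{N}>a}}$ for a standard normal $N$, which diverges like $\log(1/a)$ as $a\to0$; your claim that it is bounded by $C/a$ times something summable does not hold, and this route produces an unwanted factor $\abs{\log((t-s)/t)}$. Your other suggested bound, $\pr(\eta\le-a)\le\tfrac12e^{-a^2/(2\sigma^2)}$, avoids the issue entirely: integrating $e^{-\rho^2x^2/(2\sigma^2)}$ against the density of $B^H_t$ (bounded by $(2\pi)^{-1/2}t^{-H}$) gives $C\sigma/(\rho t^H)$ directly, with no splitting and no logarithm. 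So the proof is salvageable, but as written the two key estimates — the size of $1-\rho$ and the handling of the negative moment — are both incorrect.
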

\begin{proof} As the distribution of $B^H$ is symmetric and continuous, it is enough to estimate $\mathsf P(B^H_s< 0< B^H_t)$.
By the self-similarity of $B^H$,
$$
\mathsf P(B^H_s< 0< B^H_t) = \mathsf P(B^H_{s/t}< 0< B^H_1).
$$
If $s/t$ is small (less than $1/2$ say), then $\abs{t-s}^H t^{-H} = \abs{1-s/t}^{H}> 2^{-H}$, so \eqref{probest} holds
with $C=2^{H}$. Thus, we only have to consider $s/t$ close to $1$. Denote $u=s/t$, $\rho(u) = u^{-H}\mex{B^H_u B^H_1}
= (u^{-H}+u^{H} - u^{-H}(1-u)^{2H})/2$. Write $$1-\rho(u) = u^{-H} \left(
(1-u)^{2H} - (1-u^H)^2\right)\le C(1-u)^{2H},$$
so we can estimate
\begin{gather*}
\mathsf P(B^H_{u}< 0< B^H_1) = \frac{1}{2\pi (1-\rho(u)^2)^{1/2}}\int_0^\infty \int_{-\infty}^0 \exp\set{-\frac{x^2 + y^2 - 2\rho(u) xy}{2(1-\rho(u)^2)}}dx\,dy\\
= \frac{(1-\rho(u)^2)^{1/2}}{2\pi }\int_0^\infty \int_{-\infty}^0 \exp\set{-(x^2 + y^2 - 2\rho(u) xy)/2}dx\,dy\\
\le C(1-\rho(u))^{1/2} \int_0^\infty \int_{-\infty}^0 \exp\set{-(x-y)^2/2}dx\,dy
 \le  C(1-u)^H.
\end{gather*}
A simple observation that $(1-u)^H = t^{-H}(t-s)^H$ concludes the proof.
\end{proof}
\begin{theorem}\label{abreakthroughtheorem}
Let for a random variable $\xi$ there exist a number $a>0$ and an $\mathbb F$-adapted almost surely
$a$-H\"older continuous process $\set{z_t,t\in[0,1]}$
such that $z_1 = \xi$. Then for any $\alpha \in (1-H, (1-H+a)\wedge 1/2)$ there exists an $\mathbb F$-adapted
process $\psi$ such that $\norm{\psi}_{\alpha,1}<\infty$ and $\int_0^1
\psi_s dB^H_s = \xi$.
\end{theorem}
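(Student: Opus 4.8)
The plan is to refine the improper construction of Theorem~\ref{thmimproper} so that the resulting integrand has finite norm on all of $[0,1]$; the H\"older continuity of $z$ is precisely what makes this feasible. Fix a partition $0=t_0<t_1<t_2<\dots$, $t_n\uparrow 1$, with steps $\Delta_n=t_n-t_{n-1}$ to be specified, and set $\delta_1=z_{t_1}$, $\delta_n=z_{t_n}-z_{t_{n-1}}$ for $n\ge2$, so that a.s.\ $\abs{\delta_n}\le C(\omega)\Delta_n^{a}$ for $n\ge2$, with $C(\omega)$ the random H\"older constant of $z$. Applying the construction of Lemma~\ref{mainlemma} on each block $[t_{n-1},t_n]$ (if convenient with its escape levels rescaled to the order of $\abs{\delta_n}$) yields an $\mathbb F$-adapted $\varphi^n$ with $v^n_t=\int_{t_{n-1}}^{t}\varphi^n_s\,dB^H_s\to\infty$ as $t\to t_n-$; setting $\tau_n=\min\set{t\ge t_{n-1}:v^n_t=\abs{\delta_n}}<t_n$ and
\[
\psi_t=\sum_{n\ge1}\sign(\delta_n)\,\varphi^n_t\,\ind{[t_{n-1},\tau_n]}(t)
\]
gives, just as in Theorem~\ref{thmimproper}, the telescoping identity $\int_0^{t_n}\psi_s\,dB^H_s=\sum_{k\le n}\delta_k=z_{t_n}$, and $\psi$ is $\mathbb F$-adapted.

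Granting $\norm{\psi}_{\alpha,1}<\infty$, the integral $\int_0^1\psi_s\,dB^H_s$ is well defined, and $t\mapsto\int_0^t\psi_s\,dB^H_s$ is continuous on $[0,1]$, since $\abs{\int_s^t\psi_r\,dB^H_r}\le C(\omega)\norm{\psi}_{1,\alpha,[s,t]}$ and $\norm{\psi}_{1,\alpha,[s,t]}\to0$ as $t-s\to0$ by dominated convergence, using $\norm{\psi}_{\alpha,1}<\infty$ and local boundedness of $\psi$. Hence
\[
\int_0^1\psi_s\,dB^H_s=\lim_n\int_0^{t_n}\psi_s\,dB^H_s=\lim_n z_{t_n}=z_1=\xi,
\]
which is the assertion. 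So everything reduces to the bound $\norm{\psi}_{\alpha,1}<\infty$.

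To obtain it I would split $\norm{\psi}_{\alpha,1}$ as in Lemma~\ref{mainlemma} and Theorem~\ref{distrthm} --- the $s^{-\alpha}$-term, the contribution of pairs $(s,u)$ lying in the same block, and the contribution of pairs with $u$ in an earlier block --- and bound the $n$th block's contribution by a power of $\Delta_n$ times random constants. The estimate $\abs{\delta_n}\le C(\omega)\Delta_n^{a}$ is what makes the $n$th block ``small'': the sub-interval $[t_{n-1},\tau_n]$ on which $\varphi^n$ is active, and the range of values of $B^H$ that it uses, both shrink with $n$, and for a suitable partition the resulting series converge, the natural condition being $\alpha<1-H+a$ --- which is what forces the restriction on $\alpha$. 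The further difficulty is that $f_\beta$ is only $\beta$-H\"older with $\beta<1$, so the double-integral part of a block's norm picks up factors that blow up where the argument of $f_\beta$ is near $0$; keeping these under control requires $B^H$ to stay away from the relevant centering value on the active sub-interval. This is where Lemma~\ref{probfbm<0} enters: for $n\ge2$, where $t_{n-1}$ is bounded away from $0$, it bounds the probability that $B^H$ changes sign on $[t_{n-1},t_n]$ --- hence of the ``bad'' event that the argument of $f_\beta$ comes near $0$ --- by a quantity summable over $n$ for the chosen partition, so the Borel--Cantelli lemma leaves only finitely many bad blocks; those, together with their cross terms, contribute an a.s.\ finite amount by the qualitative finiteness already supplied by Lemma~\ref{mainlemma}, whereas on every other block the quantitative estimates apply. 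The first block $[0,t_1]$ is treated separately, its contribution being a.s.\ finite again by Lemma~\ref{mainlemma}.

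The step I expect to be the main obstacle is precisely this norm estimate: the delicate bookkeeping of the cross-block double-integral terms near $t=1$, and the joint choice of the partition and of $\beta$ that pins down the threshold $\alpha<(1-H+a)\wedge 1/2$. Everything else --- adaptedness of $\psi$, the telescoping identity, and the passage to the limit --- is routine given Lemma~\ref{mainlemma}, Theorem~\ref{thmimproper}, and the continuity noted above.
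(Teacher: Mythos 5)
There is a genuine gap, and it sits exactly where you predicted: the norm estimate. Your construction reuses the integrand of Lemma~\ref{mainlemma}, i.e.\ $\varphi^n_t$ built from $f_\beta(B^H_t-B^H_{\cdot})$, on every block. For that integrand the only finiteness statement available is Theorem~\ref{ito}, which is purely qualitative: it gives $\norm{\varphi^n}_{\alpha,[t_{n-1},t_n]}<\infty$ a.s.\ on each block but no decay in $n$, and you need the blockwise norms to be \emph{summable}. A quantitative bound is not easy to extract, because $f_\beta$ is only $\beta$-H\"older with $\beta H<1-H<\alpha$, so the crude H\"older bound on $\abs{\psi_t-\psi_s}$ makes the inner integral $\int(t-s)^{\beta H-1-\alpha}ds$ divergent; the finiteness in Theorem~\ref{ito} comes from occupation-time cancellations that do not obviously scale with $\Delta_n$ or $\abs{\delta_n}$. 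Your proposed rescue via Lemma~\ref{probfbm<0} misreads that lemma: it is a two-point estimate $\pr(B^H_sB^H_t\le 0)\le C(t-s)^Ht^{-H}$, not a bound on the probability of a sign change over a whole block (which is $1$ here, since the argument of $f_\beta$ is $B^H_t-B^H_{t_{n-1}}$ and vanishes at the left endpoint of \emph{every} block), so there is no summable ``bad event'' to feed into Borel--Cantelli.

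The paper avoids all of this by changing the integrand rather than estimating the old one. On a generic block it takes $\psi_t=n^\kappa\sign(B^H_t-B^H_{t_{n-1}})\cdot(\pm1)\ind{t\le\tau_n}$, a process with values in $\set{0,\pm n^\kappa}$; then $\abs{\psi_t-\psi_s}\le 2n^\kappa\big(\ind{\text{sign change between }s\text{ and }t}+\ind{s\le\tau_n<t}\big)$, and it is precisely here that Lemma~\ref{probfbm<0} is used --- inside $\mex{\norm{\psi^A}_{\alpha,1}}$, turning the sign-change indicator into $(t-t_{n-1})^{-H}(t-s)^H$ with $H>\alpha$, which makes the double integral over each block of order $n^\kappa\Delta_n^{1-\alpha}$ and the series convergent for $\kappa<\gamma(1-\alpha)-1$. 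The price of the bounded integrand is that a block may fail to reach its target $\delta_{n-1}$; the H\"older hypothesis $\abs{\delta_{n-1}}\le C(\omega)\Delta_n^a$ enters through the small-ball estimate and Borel--Cantelli to show only finitely many blocks fail, with the divergent Lemma~\ref{mainlemma} construction kept only as a finitely-often-used fallback. Your reduction to $\norm{\psi}_{\alpha,1}<\infty$ via telescoping and continuity is fine, but the estimate itself, as sketched, does not go through with the $f_\beta$-based integrand and the tools the paper provides.
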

\begin{proof}
We can assume without loss of generality that $a<H$.

\emph{Step 1. Construction.}
Take some $\gamma >(1-\alpha-H+a)^{-1}>1$ and put $\Delta_n = n^{-\gamma}/\zeta(\gamma)$,  $t_0=0$, $t_n = \sum_{k=1}^{n} \Delta_k$, $n\ge 1$.
For brevity, denote $\xi_n = z_{t_n}$, $\delta_n = \abs{\xi_n-\xi_{n-1}}$.

We construct process $\psi$ recursively on intervals $(t_n,t_{n+1}]$. First, set $\psi_t=0$ for $t\in[t_0,t_1]$ and fix some $\kappa\in\big(\gamma(H-a),\gamma(1-\alpha)-1\big)$.
This is possible because $\gamma(H-a)<\gamma(1-\alpha)-1$ thanks to
the choice of $\gamma$. Observe also that $a>H-\kappa/\gamma$ with such choice of $\kappa$. %Also fix some $b\in (H-\kappa/\gamma,a)$.

Denote $v_t = \int_0^t \psi_s dB^H_s$.  If $\psi$ is constructed on $[t_0,t_{n-1}]$ for some $n\ge 2$, we will show
how to construct it on $(t_{n-1},t_n]$.  To this end, consider two cases.

\underline{Case A} $v_{t_{n-1}}=\xi_{n-2}$. Define $$\tau_n = \min\set{t\ge t_{n-1}: n^{\kappa}\abs{B^H_t - B^H_{t_{n-1}}} =
\delta_{n-1}}\wedge t_{n}$$ and set
$$\psi_t = n^\kappa \sign(B^H_t - B^H_{t_{n-1}})\sign(\xi_{n-1}-\xi_{n-2})\ind{t\le \tau_n}$$ for $t\in[t_{n-1},t_{n})$.
By the It\^o formula,
\begin{equation*}%\label{vtn}
v_{t_{n}} = v_{t_{n-1}} + n^{\kappa}\abs{B^H_{\tau_n}-B^H_{t_{n-1}}}\sign(\xi_{n-1}-v_{t_{n-1}}),
\end{equation*}
so  we have $v_{t_n} = \xi_{n-1}$ provided $\tau_n < t_n$.

\underline{Case B} $v_{t_{n-1}}\neq \xi_{n-2}$. In this case we use a construction similar to that of Theorem~\ref{thmimproper}. Namely,
let $\varphi_t^n$ be an adapted process on $[t_{n-1},t_n]$ such that $v_t^n := \int_{t_{n-1}}^{t_n} \varphi_s^n dB^H_s \to \infty$,
$t\to t_n-$, define $\tau_n =  \min\set{t\ge t_{n-1}: v_t^n =
\abs{\xi_{n-1}-v_{t_{n-1}}}}$ and set $\psi_t = \varphi_t^n \sign(\xi_{n-1}-v_{t_{n-1}})\ind{t\le \tau_n}$ for $t\in[t_{n-1},t_{n})$.
Then $v_{t_{n}} = \xi_{n-1}$.

\emph{Step 2.} We argue that almost surely there is $N(\omega)$ such that $v_{t_{n}}=\xi_{n-1}$ for $n\ge N(\omega)$.
As in the proof of Lemma~\ref{mainlemma}, we will use Borel--Cantelli lemma and the small ball estimate \eqref{eq:m-r}.
For brevity, we will omit the
phrase ``almost surely'' in the rest of the proof.

Define events $$C_n = \set{\sup_{t\in [t_{n-1},t_{n}]} n^{\kappa}\abs{B^H_t -B^H_{t_{n-1}}}\le \delta_{n-1}},\ n\ge 2.$$
We are going to show
that only finite number of $C_n$ happens. Take some $b\in (H-\kappa/\gamma,a)$. By our assumption,
$$\delta_{n-1} = \abs{z_{t_{n-1}} - z_{t_{n-2}}}\le C(\omega) \Delta_{n-1}^a \le C(\omega) \Delta_{n}^a.$$
There exists $N_1(\omega)$ such that $C(\omega) \Delta_n^a\le \Delta_n^b$ for $n\ge N_1(\omega)$,
therefore
\begin{equation}
\label{N1omega}
\delta_{n-1}\le \Delta_n^b \quad\text{ for }n\ge N_1(\omega).
\end{equation}
So it is enough to prove that only finite number of events
$$
D_n = \set{\sup_{t\in [t_{n-1},t_{n}]} n^{\kappa}\abs{B^H_t -B^H_{t_{n-1}}}\le \Delta_n^b}.
$$
happens.
The increments of fBm $B^H$ are stationary, hence by the small ball estimate for
$n$ sufficiently large
\begin{gather*}
\mathsf P(D_n) = \mathsf P\left(\sup_{t\in [0,\Delta_n]} n^\kappa \abs{B^H_t}< \Delta_n^{b}\right)\\
= \mathsf P\left(\sup_{t\in [0,\Delta_n]}  \abs{B^H_t}< \zeta(\gamma)^{-b}n^{-b\gamma-\kappa}\right)\le \exp\set{- c\zeta(\gamma)^{{b}/{H}-1}
n^{-\gamma + (b\gamma + \kappa)/H}}.
\end{gather*}
Since $b>H-\kappa/\gamma$, equivalently, $-\gamma + (b\gamma + \kappa)/H>0$, we have that $\sum_{n=1}^\infty \mathsf P(D_n)<\infty$.
So by the Borel--Cantelli lemma, only finite number of events $D_n$ happens. As we have already noted, this implies that the same is true for $C_n$.
Thus, for some $N(\omega)$ we have $\sup_{t\in [t_{n-1},t_{n}]} n^{\kappa}\abs{B^H_t -B^H_{t_{n-1}}}> \delta_{n-1}$ for all $n\ge N(\omega)$.
This implies that $v_{t_{M}}=\xi_{M-1}$ no matter whether we have Case A or B on $(t_{M-1},t_M]$, moreover, we have Case A on $(t_{n-1},t_n]$
and $v_{t_n} = \xi_{n-1}$ for all $n\ge N(\Omega)+1$.

\emph{Step 3.} Now we prove that $\norm{\psi}_{\alpha,1}<\infty$ a.s.

Let for $n\ge 2$ $$A_n = \set{\text{We have Case A on the interval} (t_{n-1},t_{n}]},\quad B_n = A_n^c.$$
Write $\psi_t = \psi_t^A + \psi_t^B$, where
$$
\psi_t^A = \psi_t\sum_{n= 2}^\infty  \ind{(t_{n-1},t_{n}]}(t) \ind{A_n}
$$
and $\psi_t^B$ is defined similarly. By Step 2, only finite numbers of the events $B_n$ take place, hence 
the finiteness of $\norm{\psi^B}_{\alpha,1} $ can be proved  as in Lemma \ref{mainlemma} and Theorem \ref{distrthm}.

It remains to prove that $\norm{\psi^A}_{\alpha,1}<\infty$. Clearly, it is enough to show that
$\mex{\norm{\psi^A}_{\alpha,1}}<\infty$. Write
$$
\mex{\norm{\psi^A}_{\alpha,1}} = I_1 + I_2,
$$
where
\begin{gather*}
I_1 = \int_0^1 \mex{{\abs{\psi_t}}{t^{-\alpha}}}dt \le C\sum_{n=2}^\infty \int_{t_{n-1}}^{t_{n}} n^{\kappa} dt \\
= \sum_{n=2}^\infty n^{\kappa}\Delta_{n} \le  C  \sum_{n=1}^\infty n^{\kappa-\gamma}<\infty,\\
I_2 = \int_0^1 \int_0^t \mex{{\abs{\psi_t-\psi_s}}{(t-s)^{-1-\alpha}}}ds\,dt\\
 = \sum_{n=2}^\infty \int_{t_{n-1}}^{t_{n}} \left( \int_0^{t_{n-1}} +\int_{t_{n-1}}^t \right)\mex{{\abs{\psi_t-\psi_s}}{(t-s)^{-1-\alpha}}}ds\,dt.
\end{gather*}
Now estimate the terms individually, denoting $\sigma_n(t) = \sign(B^H_t-B^H_{t_{n-1}})$:
\begin{gather*}
\sum_{n=2}^\infty \int_{t_{n-1}}^{t_{n}} \int_0^{t_{n-1}}\mex{{\abs{\psi_t-\psi_s}}{(t-s)^{-1-\alpha}}}ds\,dt\\
\le \sum_{n=2}^\infty 2n^{\kappa}\int_{t_{n-1}}^{t_{n}} \int_0^{t_{n-1}}{(t-s)^{-1-\alpha}}ds\,dt\\
\le C \sum_{n=2}^\infty  n^{\kappa}\int_{t_{n-1}}^{t_{n}} {(t-t_{n-1})^{-\alpha}}dt \le 
C\sum_{n=2}^\infty  n^{\kappa}\Delta_{n}^{1-\alpha} \le C\sum_{n=1}^\infty  n^{\kappa-\gamma(1-\alpha)}<\infty;\\
I_2':=\sum_{n=2}^\infty \int_{t_{n-1}}^{t_{n}} \int_{t_{n-1}}^t \mex{{\abs{\psi_t-\psi_s}}{(t-s)^{-1-\alpha}}}ds\,dt\\
=\sum_{n=2}^\infty n^\kappa\int_{t_{n-1}}^{t_{n}} \int_{t_{n-1}}^t \mex{\abs{\sigma_n(t)\ind{t\le \tau_n}-\sigma_n(s)\ind{s\le \tau_n}}(t-s)^{-1-\alpha}\ind{A_n}}ds\,dt
\\
\le \sum_{n=2}^\infty n^{\kappa} \int_{t_{n-1}}^{t_n} \int_{t_{n-1}}^t{\mex{\abs{\sigma_n(t)-\sigma_n(s)}+\ind{s\le \tau_n<t}}}{(t-s)^{-1-\alpha}}ds\,dt\\
\le \sum_{n=2}^\infty n^{\kappa} \int_{t_{n-1}}^{t_n} \int_{t_{n-1}}^t\frac{P\big((B^H_s-B^H_{t_{n-1}})(B^H_t-B^H_{t_{n-1}})\le 0\big)+
\mex{\ind{s\le \tau_n<t}}}{(t-s)^{1+\alpha}}ds\,dt.
%\le C\sum_{n=1}^\infty n^{\kappa}\left(\int_{t_n}^{t_{n+1}} (t-t_n)^{-H}\int_{t_n}^t {(t-s)^{H-1-\alpha}}ds\,dt
%+  \mex{(t_{n+1}-\tau_n)^{1-\alpha}}\right)
%\\
%\le C\sum_{n=1}^\infty n^{\kappa}\left(\int_{t_n}^{t_{n+1}} (t-t_n)^{-H} {(t-t_n)^{H-\alpha}}dt + n^{-\gamma(1-\alpha)}\right)\\
%\le C\sum_{n=1}^\infty  n^{\kappa-\gamma(1-\alpha)}<\infty,
\end{gather*}
By the stationarity of fBm increments  and Lemma \ref{probfbm<0}, 
$$P\big((B^H_s-B^H_{t_{n-1}})(B^H_t-B^H_{t_{n-1}})\le 0\big)\le C(t-t_{n-1})^{-H} (t-s)^{H}.
$$
Also observe that 
\begin{gather*}
\int_{t_{n-1}}^{t_n} \int_{t_{n-1}}^t{\ind{s\le \tau_n<t}}{(t-s)^{-1-\alpha}}ds\,dt \le
C\int_{\tau_n}^{t_n} (t-\tau_n)^{-\alpha}\\ \le C (t_n-\tau_n)^{1-\alpha}\le C\Delta_n^{1-\alpha}.
\end{gather*}
Then we can continue estimation:
\begin{gather*}
I_2' \le C\sum_{n=2}^\infty n^\kappa\left(\int_{t_{n-1}}^{t_n} (t-t_{n-1})^{-H} \int_{t_{n-1}}^t (t-s)^{H-1-\alpha} ds\,dt+\Delta_n^{1-\alpha}\right)\\
\le C\sum_{n=2}^\infty n^\kappa\left(\int_{t_{n-1}}^{t_n} (t-t_{n-1})^{-\alpha} ds+n^{-\gamma(1-\alpha)}\right)\le C
\sum_{n=2}^\infty n^{\kappa-\gamma(1-\alpha)}<\infty.
\end{gather*}
Concluding, $\norm{\psi}_{\alpha,1}<\infty$, as required.
\end{proof}
\begin{remark}
It is easy to see that the assumption of Theorem~\ref{abreakthroughtheorem} is equivalent to the following one:
there exists a number $a>0$, an increasing sequence $\set{t_n,n\ge 1}$ of points converging to $1$ and a sequence of random variables $\set{\xi_n,n\ge 1}$ such that $\xi_n$ is $\F_{t_n}$-measurable for any $n\ge 1$ and
\begin{equation}\label{equivformulation}
\abs{\xi_n-\xi} = O(\abs{t_n-1}^a)
\end{equation}
 a.s.\ as
$n\to\infty$. (Clearly, this condition is implied by the assumption of Theorem~\ref{abreakthroughtheorem}; vice versa it can be proved by a clever linear interpolation.)
\end{remark}

A natural question is what random variables satisfy the assumption of Theorem~\ref{abreakthroughtheorem}. Below we give some examples of such random variables.

\begin{example}$\xi = F(B^H_{s_1},\dots,B^H_{s_n})$,
where $F:\R^n\to \R$ is locally H\"older continuous with respect to each variable. In this case we can set $z_t = F(B^H_{s_1\wedge t},\dots,B^H_{s_n\wedge t})$, which is clearly H\"older continuous.
\end{example}
\begin{example}
$\xi = G(\set{B^H_s,s\in[0,1]})$, where $G\colon C[0,1]\to \mathbb R$ is locally H\"older continuous with respect to the  supremum norm on $C[0,1]$. In the case one can set $z_t = G(\set{B^H_{s\wedge t},s\in[0,1]})$.
\end{example}
\begin{example}
$\xi = \ind{A}$, $A\in\F$. Indeed, for some increasing sequence $\set{t_n,n\ge 1}$ of points converging to $1$, in view of the right continuity of $\mathbb F$, the set $A$ can be approximated by some $\F_{t_n}$-measurable sets $A_n$ in probability. Hence, some subsequence of the indicator
functions $\ind{A_n}$ (without loss of generality, the sequence itself) converges almost surely. But then $\abs{\ind{A_n}-\ind{A}}=0$ a.s.\ for all $n$ large enough, so \eqref{equivformulation} is obvious.

Consequently, any simple $\F$-measurable function also satisfies the assumption of Theorem~\ref{abreakthroughtheorem}.
\end{example}

In view of financial applications, the three examples given above and their transformations are enough, because they cover virtually all possible derivative
securities: European options, Asian options, barrier options, lookback options,  digital options etc. %Still, it is an open question whether there is an equivalent condition which is easier to check.

%Moreover, it is an open problem whether there exists a single random variable which \emph{does not} satisfy the assumption of Theorem~\ref{abreakthroughtheorem}. This question is interesting on its own, so we pose it separately (in a less problem-specific form) below.
%\begin{opn}
%Let $\set{\eta_n,n\ge 1}$ be a sequence of continuous random variables (e.g.\ independent standard Gaussian), $\set{h_n,n\ge 1}$ be a sequence of positive numbers converging to zero (e.g.\ $h_n=n^{-a}$, $a>0$). Denote $\mathcal{G}_n = \sigma\{\eta_1,\eta_2\dots,\eta_n\}$, $\mathcal{G} = \sigma\set{\eta_n,n\ge 1}$.
%
%Does there exist a $\mathcal{G}$-measurable random variable $\eta$ such that $$\limsup_{n\to \infty} h_n^{-1}\abs{\eta_n-\eta}=\infty$$
%with positive probability?
%
%Observe that the answer is positive for a sequence of independent standard Bernoulli variables. Indeed, such sequence can be understood as a binary expansion of a real number from $[0,1]$. Thus, the question is about the rate of approximation of a Borel function by simple functions which are constant on the intervals of a dyadic partition, and it is easy to construct a function, which cannot be approximated with a power rate.
%\end{opn}

Further we will show that the assumption of Theorem \ref{abreakthroughtheorem} is not only natural, but also is close to be a criterion: it is a necessary condition under additional assumption that $\psi$ is \emph{continuous}.
\begin{theorem}
Let $\xi$ be an $\mathcal F_1$-measurable random variable and let there
exist an $\mathbb F$-adapted continuous process $\psi$ such that for some $\alpha>1-H$ \ $\norm{\psi}_{\alpha,1}<\infty$ a.s. and $\int_0^1 \psi_s dB^H_s = \xi$.
Then the assumption of Theorem \ref{abreakthroughtheorem} is satisfied.
\end{theorem}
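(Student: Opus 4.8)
The plan is to take $z_t := \int_0^t \psi_s\,dB^H_s$ as the candidate $\mathbb F$-adapted process (it is adapted since $\psi$ is adapted and the generalized Lebesgue--Stieltjes integral over $[0,t]$ depends only on the path of $B^H$ on $[0,t]$; it is continuous in $t$ because $\psi$ is continuous, see the remark after Theorem~\ref{distrthm}), and to show that $z_t$ is a.s.\ $a$-H\"older continuous for a suitable $a>0$, with $z_1=\xi$. The last equality is immediate from the hypothesis $\int_0^1\psi_s\,dB^H_s=\xi$, so the whole content is the H\"older estimate for $z$.

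The key step is to bound $\abs{z_t - z_s}$ for $0\le s< t\le 1$. By additivity of the integral, $z_t - z_s = \int_s^t \psi_r\,dB^H_r$, and using the fractional-derivative representation one has the estimate $\abs{\int_s^t \psi_r\,dB^H_r}\le C(\omega)\,\norm{\psi}_{1,\alpha,[s,t]}$, where now the relevant seminorm is over the subinterval $[s,t]$ rather than $[0,t]$. Here $C(\omega)$ should be taken as $\sup_{0\le u<v\le 1}\abs{D_{u-}^{1-\alpha}B^H_{v-}}$, which is finite a.s.\ and independent of $s,t$. So I would first establish the pointwise bound
$$
\abs{z_t - z_s}\le C(\omega)\int_s^t\left(\frac{\abs{\psi(r)}}{(r-s)^\alpha} + \int_s^r\frac{\abs{\psi(r)-\psi(z)}}{(r-z)^{1+\alpha}}\,dz\right)dr .
$$
Next, since $\psi$ is continuous on the compact $[0,1]$ it is bounded, say $\abs{\psi}\le M(\omega)$; moreover continuity gives a (random) modulus of continuity, but to get a genuine H\"older exponent I would instead argue that $\psi$ restricted to $[0,1]$, being continuous, is uniformly continuous, and bound the inner double integral crudely: $\int_s^r\frac{\abs{\psi(r)-\psi(z)}}{(r-z)^{1+\alpha}}\,dz\le 2M(\omega)\int_s^r (r-z)^{-1-\alpha}\,dz$ is \emph{not} integrable near $z=r$, so one cannot be that crude. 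Instead use $\abs{\psi(r)-\psi(z)}\le \omega_\psi(r-z)$ where $\omega_\psi$ is the modulus of continuity; then $\int_s^r \omega_\psi(r-z)(r-z)^{-1-\alpha}\,dz$ converges provided $\omega_\psi(h)=o(h^{\alpha'})$ for some $\alpha'>\alpha$ — which need not hold for a merely continuous $\psi$.

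This is exactly where the main obstacle lies, and the resolution is the one already signposted by the equivalent formulation in the remark after Theorem~\ref{abreakthroughtheorem}: I do \emph{not} need $z$ itself to be H\"older on all of $[0,1]$ with a fixed exponent. Rather, it suffices to produce one sequence $t_n\uparrow 1$ and $\F_{t_n}$-measurable $\xi_n$ with $\abs{\xi_n-\xi}=O((1-t_n)^a)$ a.s. So I would set $\xi_n := z_{t_n} = \int_0^{t_n}\psi_s\,dB^H_s$ (which is $\F_{t_n}$-measurable) for a sequence $t_n\uparrow 1$ to be chosen, and estimate $\abs{\xi - \xi_n} = \abs{\int_{t_n}^1\psi_s\,dB^H_s}\le C(\omega)\norm{\psi}_{1,\alpha,[t_n,1]}$. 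The point is that $\norm{\psi}_{1,\alpha,[t_n,1]}\to 0$ as $t_n\to 1$ whenever $\norm{\psi}_{1,\alpha,[0,1]}<\infty$ (dominated convergence / absolute continuity of the integral defining the seminorm), and with a quantitative handle one gets a rate. Concretely, writing $h_n = 1-t_n$, one splits the seminorm over $[t_n,1]$ into the $\abs{\psi(r)}(r-t_n)^{-\alpha}$ part, bounded by $M(\omega)\int_{t_n}^1 (r-t_n)^{-\alpha}\,dr = C M(\omega) h_n^{1-\alpha}$, and the double-integral part, bounded using $\abs{\psi(r)-\psi(z)}\le \omega_\psi(h_n)$ on $[t_n,1]$ by $C\,\omega_\psi(h_n)\,h_n^{1-\alpha}$; since $\omega_\psi(h_n)\to 0$, the dominating term is $C M(\omega) h_n^{1-\alpha}$, giving $\abs{\xi-\xi_n}\le C(\omega)\,h_n^{1-\alpha}$. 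As $1-\alpha>0$ this is precisely condition \eqref{equivformulation} with $a=1-\alpha$ (for any fixed choice of $t_n\uparrow 1$, e.g.\ $t_n=1-1/n$), and by the cited remark this is equivalent to the hypothesis of Theorem~\ref{abreakthroughtheorem}. Thus the only genuinely new estimate needed is the subinterval version $\abs{\int_s^t\psi\,dB^H}\le C(\omega)\norm{\psi}_{1,\alpha,[s,t]}$ together with its decay as $s\to t$, both of which are routine once continuity (hence boundedness and a modulus of continuity) of $\psi$ is invoked.
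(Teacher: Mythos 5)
There is a genuine gap in the final estimate. Your reduction to condition \eqref{equivformulation} via the remark after Theorem~\ref{abreakthroughtheorem} is legitimate, and the subinterval bound $\abs{\int_{t_n}^1\psi_r\,dB^H_r}\le C(\omega)\norm{\psi}_{1,\alpha,[t_n,1]}$ is fine. But the bound you then place on the double-integral part of $\norm{\psi}_{1,\alpha,[t_n,1]}$, namely $C\,\omega_\psi(h_n)\,h_n^{1-\alpha}$, is invalid: replacing $\abs{\psi(r)-\psi(z)}$ by the constant $\omega_\psi(h_n)$ leaves you with $\int_{t_n}^r(r-z)^{-1-\alpha}\,dz=+\infty$, which is exactly the non-integrability at $z=r$ that you yourself flagged two sentences earlier as ``one cannot be that crude.'' The objection you raised against the global Hölder argument applies verbatim to your own concluding step. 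Moreover, the problem is not merely cosmetic: from continuity of $\psi$ together with $\norm{\psi}_{\alpha,1}<\infty$ alone, dominated convergence gives only $\norm{\psi}_{1,\alpha,[t_n,1]}\to 0$ with no rate, whereas \eqref{equivformulation} demands a polynomial rate $O(h_n^a)$. So the route through the subinterval seminorm cannot close without an additional regularity input on $\psi$.

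That additional input is precisely what the paper's proof supplies. It first upgrades $\psi$ from continuous to almost surely Hölder continuous of some positive order, using the Garsia--Rodemich--Rumsey inequality applied to the double integral $\int_0^1\int_0^s\abs{\psi(s)-\psi(z)}(s-z)^{-1-\alpha}\,dz\,ds<\infty$ hidden inside $\norm{\psi}_{\alpha,1}$; it then observes that the integral of a Hölder integrand against the Hölder path $B^H$ is a Young integral, whose standard estimates give Hölder continuity of $z_t=\int_0^t\psi_s\,dB^H_s$ directly (via the bound $\abs{z_t-z_s}\le\abs{\psi_s}\abs{B^H_t-B^H_s}+C\abs{t-s}^{a+b}$, not via the fractional seminorm over $[s,t]$). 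If you want to repair your argument, you need some version of this step: extract quantitative Hölder regularity of $\psi$ from the hypotheses before attempting to estimate the increments of $z$.
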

\begin{proof}
Thanks to the Garsia--Rodemich--Rumsey inequality \cite{GRR}, it follows from continuity of $\psi$  and estimate $\norm{\psi}_{\alpha,1} <\infty$   that $\psi$ is almost surely H\"older continuous of any order $a<\alpha$. We also know that $B^H$ is almost surely H\"older continuous of any order $b<H$. Then by the well known property of the Lebesgue--Stieltjes integral (which is Young integral in this situation),  $z_t = \int_0^t \psi_s dB^H_s$ is almost surely H\"older continuous of any order $c<\alpha$.
 $\mathbb F$-adaptedness of $z$ is obvious.
\end{proof}

For completeness, we give the following example showing that there exist random variables which do not satisfy the assumption of Theorem~\ref{abreakthroughtheorem} even in the case where the filtration $\mathbb F$ is generated by $B^H$.

\begin{example}
Assume that $\mathbb F = \set{\mathcal F_t=\sigma(B_s^H,s\in[0,t]),t\in [0,1]}$. It is well known (see \cite{nvv}) that there
exists a Wiener process $W$ such that its natural filtration coincides with $\mathbb F$. Define
$\xi = \int_{1/2}^1 g(t){dW_t},$ where $g(t) = (1-t)^{-1/2}\abs{\log (1-t)}^{-1}$.
We will show that $\xi$ does not satisfy the assumption of Theorem~\ref{abreakthroughtheorem}. Roughly, the idea is that the best, at least in the mean-square sense, $\mathbb F$-adapted approximation of $\xi$ is $z_t = \int_0^t g(t) dW_t$, but $z_t$ is not H\"older continuous at $1$.

Without loss of generality we assume
that $W$ is defined on the classical Wiener space, i.e.\ $\Omega = C[0,1]=\set{\omega(t),t\in[0,1]}$, $W_t(\omega) = \omega(t)$, $\pr$ is the Wiener
measure, $\F_t$ is the $P$-completion of the $\sigma$-algebra generated by events of the form $\set{\omega(u)\in B}$, $u\le t$, $B\in\mathcal B(\mathbb R)$. Arguing by contradiction, put $t_n = 1-1/(n+1)$, $n\ge 1$ and let $\set{\zeta_n,n\ge 1}$ be a sequence of random variables
such that $\zeta_n$ is $\F_{t_n}$-measurable for each $n\ge 1$, and for some $a>0$
$$
%K(\omega):=
\sup_{n\ge 1} n^a\abs{\xi(\omega)-\zeta_n(\omega)}<\infty \quad\text{a.s.}
$$
Decompose $\xi=\xi_n+\eta_n $, where $\xi_n = \int_0^{t_n} f(t) dW_t$ is $\F_{t_n}$-measurable, $\eta_n = \int_{t_n}^1 f(t) dW_t$ is independent of $\F_{t_n}$. Then we have
\begin{equation}\label{alphan}
%K(\omega)=
\sup_{n\ge 1} n^a\abs{\eta_n(\omega)+\alpha_n(\omega)}<\infty \quad\text{a.s.},
\end{equation}
where $\alpha_n(\omega) = \xi_n-\zeta_n$ is $\F_{t_n}$-measurable.  Define the following bijective transformation on $\Omega$:
$$
\psi_n(\omega)(t) = \begin{cases}
\omega(t), &t\in [0,t_n],\\
2\omega(t_n) -\omega(t), &t\in(t_n,1]
\end{cases}
$$
(we reflect the path after the point $t_n$). It is clear that $\psi_n$ is measurable and preserves the measure $P$. In particular,
$%K(\psi_n(\omega))=
\sup_{n\ge 1} n^a\abs{\eta_n(\psi_n(\omega))+\alpha_n(\psi_n(\omega))}<\infty$ a.s. It easy to check that
$\eta_n(\psi_n(\omega)) = -\eta_n(\omega)$ a.s., and $\alpha_n(\psi_n(\omega)) = \alpha_n(\omega)$ a.s.\ due to $\F_{t_n}$-measurability.
Therefore, $\sup_{n\ge 1} n^a\abs{\eta_n(\omega)-\alpha_n(\omega)}<\infty$ a.s. Combining this relation with \eqref{alphan} and using the triangle
inequality, we get $M(\omega) := \sup_{n\ge 1} n^a\abs{\eta_n(\omega)}<\infty$ a.s. And since  the family  $\set{\eta_n,n\ge 1}$ is Gaussian, $M(\omega)$ has Gaussian tails thanks to Fernique's theorem \cite{fernique}. In particular, $\mex{M(\omega)^2}<\infty$. It follows that $$\sup_{n\ge1} n^{2a}\mex{\eta_n^2} = \sup_{n\ge 1} \frac{n^{2a}}{\log(n+1)}\le \mex{M(\omega)^2} <\infty,$$ which is absurd. Consequently, $\xi$ does not satisfy the assumption of Theorem~\ref{abreakthroughtheorem}.
\end{example}

\section{Discussion and applications} \label{s:finance}

\subsection{Application to finance}

On the time interval $[0,1]$ consider a fractional Black--Scholes, or simply $(B,S)$-, market with a risky asset (stock) $S$ and a non-risky asset (bond) $B$,
which solve the following equations:
\begin{align*}
&dB_t = r_t B_t dt\\
&dS_t = \mu S_t dt + \sigma S_t dB^H_t,
\end{align*}
equivalently, assuming $B_0=1$,
\begin{equation}\label{fracB-S}
\begin{aligned}
&B_t = \exp\set{\int_0^t r_s ds}\\
&S_t = S_0\exp\set{\mu t + \sigma B^H_t}.
\end{aligned}
\end{equation}
The interest rate $r$ can be random. For the technical simplicity we assume that it is absolutely bounded by a non-random constant.

Let $\mathbb F$ be the filtration generated by $B$ and $S$: $\mathcal F_t = \sigma\set{B_u,S_u, u\le t} = \sigma\set{B_u,B^H_u, u\le t}$.
Due to continuity of $B$ and $B^H$, $\mathbb F$ is left-continuous.

We remind standard notions from financial mathematics.
\begin{definition}\label{d:port}
A \emph{portfolio}, or trading strategy, is an $\mathbb F$-predictable process $\Pi = (\Pi_t)_{t\in [0,1]}
= (\pi ^0_t , \pi^1 _t )_{t\in [0,1]}$, where $\pi^0_t $ denotes the number of bonds, and $\pi^1 _t $
denotes the number of shares owned by an investor at time $t$. The \emph{value} of the portfolio
$\Pi$ at time $t$ is
\[
 V_t^\Pi = \pi^0_t B_t + \pi^1_t S_t .
\]

The portfolio is called \emph{self-financing} if
\begin{equation}
\label{sf}
dV_t^\Pi = \pi^0_t dB_t + \pi^1_t dS_t,
\end{equation}
that is, changes in the portfolio value are only due to changes in asset prices, so there is no external capital inflows
and outflows.
\end{definition}
\begin{remark}
Condition \eqref{sf} is understood in the sense that
\begin{align*}
V_t^\Pi &= V_0^\Pi + \int_0^t \left(\pi^0_u dB_u + \pi^1_u dS_u\right)\\ &= V_0^\Pi+ \int_0^t\left((\pi^0_u r_u B_u+\pi_u^1\mu S_u) du + \pi^1_u S_u dB^H_u\right),
\end{align*}i.e. for a self-financing strategy, we assume that the latter integrals are well-defined as Lebesgue and generalized
Lebesgue--Stieltjes integrals correspondingly.
\end{remark}
\begin{remark}
Thanks to the left continuity of $\mathbb F$, the property of $\mathbb F$-predictabi\-lity of process $\pi$ is equivalent to
$\mathbb F$-adaptedness.

Further, for any $\mathbb F$-adapted process $\pi^1$ such that $\int_0^1 \pi^1_u S_u dB^H_u$ is well defined and for any initial
capital $V_0$ it is possible to construct a self-financing portfolio $\Pi$ such that its risky part is $\pi^1$ and $V_0^\Pi = V_0$.
\end{remark}

Define the discounted value of a portfolio
$$
C_t^\Pi =  V_t^\Pi B_t^{-1}.
$$
It is easy to check that
$$
dC_t^\Pi = \pi_t^1 dX_t,
$$
where $X_t = S_t B_t^{-1}$ is the discounted risky asset price process. We stress once more that the integrability with respect to $X$
is understood in the following sense: we say that integral
\begin{equation}\label{intX}
\int_0^t a_s dX_s = \int_0^t \big( a_s(\mu-r_s)X_s ds + \sigma a_s X_s dB^H_s\big)
\end{equation}
exists if $a_s(\mu-r_s)X_s$ is Lebesgue integrable on $[0,t]$ and $\norm{a_{\cdot} X_{\cdot}}_{\alpha,t}<\infty$ for some $\alpha\in(1-H,1/2)$.

\begin{definition}\label{d:arb} A self-financing portfolio $\Pi $ is \emph{arbitrage} if $V_0^\Pi = 0 $,
$V_1^\Pi \ge 0 $ a.s., and $\mathsf P\left( V_1^\Pi>0\right) > 0 $.
 It is called \emph{strong arbitrage} if additionally there exists a constant $c > 0 $ such that $V_1^\Pi \ge c $ a.s.
\end{definition}

\begin{definition}
A \emph{contingent claim} on fractional $(B,S)$-market is a non-negative $\F_1$-measurable random variable $\xi$.

Contingent claim $\xi$ is called \emph{attainable}, or \emph{hedgeable}, if there exists a self-financing portfolio $\Pi$, which is
called a \emph{hedge}, or \emph{replicating portfolio}, for $\xi$, such that $V_1^\Pi = \xi$ a.s.

We will call $\xi$ \emph{weakly hedgeable} if there exists a portfolio $\Pi$ (a \emph{weak hedge}), self-financing on each interval $[0,t]$,
$t<1$, such that
$\lim_{t\to 1-} V_t^\Pi = \xi$ a.s.

The initial portfolio value $V_0^\Pi$ is called a \emph{hedging cost} or a \emph{weak hedging cost} correspondingly.
\end{definition}

Rogers \cite{Rogers97} showed that fractional $(B,S)$-market model admits arbitrage in an unconstrained case, like ours.
For more information on the arbitrage possibilities in these models, see \cite{bsv1} and references therein.

We have the following result on strong arbitrage, which we do not prove this immediately, as it will follow from the stronger result further (Theorem \ref{distrfmthm}).
\begin{theorem}
The fractional $(B,S)$-market model admits strong arbitrage.
\end{theorem}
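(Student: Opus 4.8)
\noindent The plan is to produce a self-financing portfolio $\Pi$ with $V_0^\Pi=0$ whose discounted value $C_t^\Pi=V_t^\Pi B_t^{-1}$ equals the constant $1$ at $t=1$. Since the interest rate satisfies $\abs{r_t}\le M$ for a non-random $M$, this will give $V_1^\Pi=B_1=\exp\set{\int_0^1 r_s\,ds}\ge e^{-M}=:c>0$ a.s., which is strong arbitrage. (This also follows at once from Theorem~\ref{distrfmthm} by prescribing a point mass at $c$ for the terminal risk profile at zero cost; below I sketch a self-contained argument based only on Lemma~\ref{mainlemma}.)

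First I would take the process $\varphi$ from Lemma~\ref{mainlemma} and set $\pi^1_t=\varphi_t/(\sigma X_t)$, where $X_t=S_tB_t^{-1}=S_0\exp\set{\mu t+\sigma B^H_t}B_t^{-1}$ is continuous and pathwise bounded away from $0$ and $\infty$ on $[0,1]$. By \eqref{intX} one has
\begin{equation*}
\int_0^t\pi^1_s\,dX_s=\frac1\sigma\int_0^t\varphi_s(\mu-r_s)\,ds+\int_0^t\varphi_s\,dB^H_s=:D_t+v_t,
\end{equation*}
with $v_t$ as in Lemma~\ref{mainlemma}. On each block $[t_{n-1},\tau_n)$ one has $\varphi_t=f_\beta(B^H_t-B^H_{t_n})$, i.e.\ $f_\beta$ of an increment of $B^H$, and $B^H$ is a.s.\ bounded on $[0,1]$; hence $\varphi$ is a.s.\ bounded on $[0,1]$, so $\sup_{t\le1}\abs{D_t}\le\frac{\abs\mu+M}{\sigma}\int_0^1\abs{\varphi_s}\,ds=:L(\omega)<\infty$. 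Therefore $D_t+v_t\to\infty$ as $t\to1-$. I would then put $\tau=\min\set{t\ge0:D_t+v_t=1}$, which is a.s.\ strictly less than $1$ by continuity and the intermediate value theorem, and define $\widehat\pi^1_t=\pi^1_t\ind{[0,\tau]}(t)$, so that $\int_0^t\widehat\pi^1_s\,dX_s=D_{t\wedge\tau}+v_{t\wedge\tau}$ and in particular $\int_0^1\widehat\pi^1_s\,dX_s=1$.

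Next I would check that $\widehat\pi^1$ defines an admissible self-financing portfolio. Because $\widehat\pi^1_tS_t=\sigma^{-1}\varphi_tB_t\ind{[0,\tau]}(t)$ and $\widehat\pi^1_tX_t=\sigma^{-1}\varphi_t\ind{[0,\tau]}(t)$ (the factor $X_t^{-1}$ cancels), the requirements $\norm{\widehat\pi^1_\cdot X_\cdot}_{\alpha,1}<\infty$ and $\norm{\widehat\pi^1_\cdot S_\cdot}_{\alpha,1}<\infty$ will follow from $\norm{\varphi}_{\alpha,\tau}<\infty$ (Lemma~\ref{mainlemma} and the Remark after it, which permits a random $\tau(\omega)<1$) together with an estimate of the form $\norm{\varphi\ind{[0,\tau]}}_{\alpha,1}\le C(\omega)+C(\omega)\int_\tau^1(t-\tau)^{-\alpha}\,dt<\infty$, obtained exactly as in the proof of Theorem~\ref{distrthm} and using that $B_\cdot$ is Lipschitz and bounded on $[0,1]$; Lebesgue integrability of the drift integrand $\sigma^{-1}\varphi_s(\mu-r_s)\ind{[0,\tau]}(s)$ is clear. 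Since $\tau$ is the first hitting time of a level by a continuous $\mathbb F$-adapted process it is a stopping time, so $\widehat\pi^1$ is $\mathbb F$-adapted and hence $\mathbb F$-predictable. Invoking the Remark after Definition~\ref{d:port}, I obtain a self-financing portfolio $\Pi$ with risky part $\widehat\pi^1$ and $V_0^\Pi=0$; then $V_1^\Pi=B_1C_1^\Pi=B_1\ge c>0$ a.s., which is the desired strong arbitrage.

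The main obstacle is the control of the drift term $D_t$: the argument collapses unless $v_t+D_t$ still tends to $\infty$, and securing this is precisely where the pathwise boundedness of $\varphi$ — a consequence of a.s.\ boundedness of $B^H$ on $[0,1]$ and of the explicit block form $\varphi_t=f_\beta(B^H_t-B^H_{t_n})$ — is used. The remaining technicalities (hitting times being stopping times for the left-continuous filtration, and multiplication by the bounded smooth factors $X^{-1}$ and $B$ preserving finiteness of $\norm{\cdot}_{\alpha,1}$) are routine.
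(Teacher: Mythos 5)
Your proof is correct and follows essentially the same route as the paper: the paper derives strong arbitrage by applying Theorem~\ref{distrfmthm} with $F$ a point mass at a positive constant, and your self-contained argument is exactly that special case unpacked — the integrand $\varphi/(\sigma X)$ is Lemma~\ref{mainlemmaX}'s construction, the drift-control via a.s.\ boundedness of $\varphi$ is the paper's own observation there, and the stopping at the first hitting time of the target level is the construction of Theorems~\ref{distrthm} and~\ref{distrfmthm}. The norm estimates you cite are the same ones the paper invokes, so nothing is missing.
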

\begin{remark}
The Ito formula from Theorem~\ref{ito} is not enough to provide a strong arbitrage. Indeed,
it gives
$$
\int_0^1 f(B_s^H) = F(B_1^H).
$$
But  $B_1^H$ is Gaussian, so it can be
arbitrarily close to $0$ with a positive probability, and since $F$ is continuous, $F(B_1^H)$ is also arbitrarily close to $0$ with
a positive probability, so strong arbitrage is impossible in this case.
\end{remark}

Now we establish an auxiliary result, similar to Lemma \ref{mainlemma}.
\begin{lemma}\label{mainlemmaX}
There exists an $\mathbb F$-adapted process $\phi=\set{\phi_t,t\in[0,1]}$ such that
for any $t<1$ integral $v_t = \int_0^t \phi_s dX_s $ is well defined in the above sense and
\item $\lim _{t\to 1-} v_t = \infty $ a.s.
\end{lemma}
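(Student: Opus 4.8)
The plan is to obtain $\phi$ from the process $\varphi$ of Lemma~\ref{mainlemma} by dividing out the discounted price. Recall that $X_t = S_0\exp\set{\mu t + \sigma B^H_t - \int_0^t r_s\,ds}$ is a.s.\ continuous and strictly positive on $[0,1]$, so the process $\phi_s := \varphi_s/(\sigma X_s)$ is well defined and $\mathbb F$-adapted. The whole point of this choice is that $\phi_\cdot X_\cdot = \varphi_\cdot/\sigma$ \emph{identically}, which makes both requirements in the definition \eqref{intX} of $\int_0^t\phi_s\,dX_s$ transparent.

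Next I would verify that $v_t=\int_0^t\phi_s\,dX_s$ is well defined for every $t<1$. By Lemma~\ref{mainlemma}, $\norm{\phi_\cdot X_\cdot}_{\alpha,t}=\sigma^{-1}\norm{\varphi}_{\alpha,t}<\infty$ a.s.\ for any $\alpha\in(1-H,1/2)$. For the drift part, $\phi_s(\mu-r_s)X_s=\sigma^{-1}(\mu-r_s)\varphi_s$; since $\abs{r}$ is bounded and $\int_0^t\abs{\varphi_s}\,ds\le\int_0^t\abs{\varphi_s}s^{-\alpha}\,ds\le\norm{\varphi}_{\alpha,t}<\infty$, this function is Lebesgue integrable on $[0,t]$. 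Hence $v_t$ exists for all $t<1$ and, by \eqref{intX},
$$
v_t=\sigma^{-1}\int_0^t(\mu-r_s)\varphi_s\,ds+\int_0^t\varphi_s\,dB^H_s.
$$

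Finally I would show $v_t\to\infty$ as $t\to 1-$. The second term tends to $+\infty$ by Lemma~\ref{mainlemma}, so it suffices that the first term stays bounded, i.e.\ $\int_0^1\abs{\varphi_s}\,ds<\infty$ a.s. This is slightly more than what the statement of Lemma~\ref{mainlemma} asserts (its norm bound is only claimed for $t<1$), but it follows directly from the construction of $\varphi$: on $[t_{n-1},\tau_n)$ the increment of $B^H$ fed into $f_\beta$ has absolute value at most $n^{-1/(1+\beta)}$, so $\abs{\varphi_s}\le(1+\beta)n^{-\beta/(1+\beta)}$ there, while $\varphi_s=0$ on $[\tau_n,t_n)$; thus $\int_{t_{n-1}}^{t_n}\abs{\varphi_s}\,ds\le(1+\beta)n^{-\beta/(1+\beta)}\Delta_n$ with $\Delta_n=n^{-\gamma}/\zeta(\gamma)$ and $\gamma>1$, and summing over $n$ gives a finite (indeed deterministic) bound. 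Therefore $\sigma^{-1}\int_0^t(\mu-r_s)\varphi_s\,ds$ converges to a finite random variable as $t\to 1-$, and $v_t\to\infty$ a.s., as required.

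The only real obstacle is precisely this last point: one must be sure that the Lebesgue (drift) part of $\int\phi\,dX$ cannot cancel the blow-up produced by the fractional-Brownian part. It would be tempting to deduce $\int_0^1\abs{\varphi_s}\,ds<\infty$ from $\norm{\varphi}_{\alpha,1}<\infty$, but the latter need not hold for the $\varphi$ of Lemma~\ref{mainlemma}; the correct route is the uniform bound on $\abs{\varphi}$ over each block $[t_{n-1},t_n)$ together with the summability of the block lengths $\Delta_n$.
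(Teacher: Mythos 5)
Your proof is correct and follows essentially the same route as the paper: set $\phi_s=\varphi_s X_s^{-1}$ (the extra factor $\sigma^{-1}$ is harmless), so that $\phi_\cdot X_\cdot$ reduces to $\varphi$ and well-definedness follows from Lemma~\ref{mainlemma}, while the drift term stays finite because $\varphi$ is a.s.\ bounded — which is exactly the uniform block bound $\abs{\varphi_s}\le(1+\beta)n^{-\beta/(1+\beta)}$ you spell out and the paper merely asserts. Your extra care in not deducing $\int_0^1\abs{\varphi_s}\,ds<\infty$ from $\norm{\varphi}_{\alpha,1}$ is a valid and welcome clarification, but it is a refinement of the same argument, not a different one.
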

\begin{proof}
Put $\phi_s = X^{-1}_s \varphi_s$, where $\varphi$ is defined in Lemma \ref{mainlemma}. Since $\phi_s(\mu-r_s)X_s
= \varphi_s(\mu-r_s)$ is almost surely bounded in $s$, and $\phi_s X_s = \varphi_s$,
we have that integral $\int_0^t \phi_s dX_s$ is well defined.
Moreover, integral $\int_0^1 \varphi_s(\mu-r_s) ds$ is finite, while $\int_0^t \varphi_s dB^H_s\to \infty$, $t\to1-$, so by \eqref{intX} we have
$v_t\to\infty$, $t\to 1-$.
\end{proof}
As a corollary, similarly to Theorem \ref{distrthm}, we have the following result.
\begin{theorem}\label{distrfmthm}
For any distribution function $F$ there is a self-financing portfolio $\Pi$ with $V_0^\Pi=0$ such that its discounted terminal
capital $C_1^\Pi$ has distribution $F$.
\end{theorem}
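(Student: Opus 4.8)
The plan is to imitate the proof of Theorem~\ref{distrthm}, replacing the pathwise integral $\int\cdot\,dB^H$ by the integral $\int\cdot\,dX$ in the sense of \eqref{intX} and using Lemma~\ref{mainlemmaX} in place of Lemma~\ref{mainlemma}. Since $X_{1/2}=S_0\exp\set{\mu/2+\sigma B^H_{1/2}}$ has a continuous distribution, there is a non-decreasing $g\colon\R\to\R$ such that $g(X_{1/2})$ has distribution $F$. It therefore suffices to construct an $\mathbb F$-adapted process $\pi^1$ for which $\int_0^1\pi^1_s\,dX_s$ is well defined in the sense of \eqref{intX} and equals $g(X_{1/2})$: by the remark following Definition~\ref{d:port} there is then a self-financing portfolio $\Pi$ with risky part $\pi^1$ and $V_0^\Pi=0$, and its discounted value satisfies $C_t^\Pi=\int_0^t\pi^1_s\,dX_s$, so $C_1^\Pi=g(X_{1/2})$ has distribution $F$.

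To build $\pi^1$, let $\phi$ be the process of Lemma~\ref{mainlemmaX} and put $v_t=\int_{1/2}^t\phi_s\,dX_s$. By that lemma this is well defined for $t<1$ and, since $\int_0^{1/2}\phi_s\,dX_s$ is a finite random constant, $v_t\to+\infty$ as $t\to1-$; moreover $v$ is continuous, being the sum of a Lebesgue integral and $\sigma\int_{1/2}^\cdot\varphi_s\,dB^H_s$ (recall $\phi_sX_s=\varphi_s$), the latter being the continuous process of Lemma~\ref{mainlemma}. Set $\tau=\min\set{t\ge1/2:v_t=\abs{g(X_{1/2})}}$; since $v_{1/2}=0$, $v$ is continuous and $v_t\to\infty$, we get $\tau<1$ a.s. Now let
\[
\pi^1_t=\phi_t\,\sign g(X_{1/2})\,\ind{[1/2,\tau]}(t),
\]
which is $\mathbb F$-adapted since $g(X_{1/2})$ is $\mathcal F_{1/2}$-measurable and $\tau$ is an $\mathbb F$-stopping time. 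Then $\int_0^1\pi^1_s\,dX_s=v_\tau\,\sign g(X_{1/2})=\abs{g(X_{1/2})}\sign g(X_{1/2})=g(X_{1/2})$.

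It remains to check that $\int_0^1\pi^1_s\,dX_s$ is well defined in the sense of \eqref{intX}. Here $\pi^1_sX_s=\varphi_s\,\sign g(X_{1/2})\,\ind{[1/2,\tau]}(s)$, so $\pi^1_s(\mu-r_s)X_s=\varphi_s(\mu-r_s)\,\sign g(X_{1/2})\,\ind{[1/2,\tau]}(s)$ is a.s.\ bounded on $[0,1]$, hence Lebesgue integrable, by boundedness of $\varphi$ and of $r$. For the requirement $\norm{\pi^1_\cdot X_\cdot}_{\alpha,1}<\infty$, argue as in Theorem~\ref{distrthm}: since $\pi^1_\cdot X_\cdot$ vanishes on $[0,1/2]\cup(\tau,1]$ and equals $\pm\varphi$ on $[1/2,\tau]$,
\[
\norm{\pi^1_\cdot X_\cdot}_{\alpha,1}\le\norm{\varphi}_{\alpha,\tau}+\int_\tau^1\!\!\int_{1/2}^\tau\frac{\abs{\pi^1_sX_s}}{(t-s)^{1+\alpha}}\,ds\,dt\le C(\omega)+C(\omega)\int_\tau^1(t-\tau)^{-\alpha}\,dt<\infty,
\]
where $\norm{\varphi}_{\alpha,\tau}<\infty$ a.s.\ for the random time $\tau<1$ by the remark after Lemma~\ref{mainlemma}. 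This finishes the proof. The only slightly delicate point --- exactly as in Theorem~\ref{distrthm} --- is this last integrability verification near $t=1$; the remaining steps are a literal transcription of the earlier arguments via the identity $\phi_sX_s=\varphi_s$.
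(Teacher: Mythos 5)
Your proof is correct and follows essentially the same route as the paper's: quantile-transform to reduce to replicating $g(\cdot)$ of an $\mathcal F_{1/2}$-measurable variable, then run the process of Lemma~\ref{mainlemmaX} from time $1/2$ and stop when $v$ hits the target, exactly as in Theorem~\ref{distrthm}; you in fact supply the sign-handling and the $\norm{\pi^1_\cdot X_\cdot}_{\alpha,1}<\infty$ verification that the paper leaves implicit. One cosmetic slip: the formula you write for $X_{1/2}$ is actually $S_{1/2}$ (it omits the discount factor $B_{1/2}^{-1}$, which involves the possibly random rate $r$), so it is cleaner to apply the quantile transform to $B^H_{1/2}$, as the paper does, where continuity of the distribution is immediate.
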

\begin{proof}
As in the proof of Theorem \ref{distrthm}, let $g$ be such that $g(B^H_{1/2})$ has distribution $F$, $\phi$ be as in Lemma \ref{mainlemmaX}
and $v_t = \int_{1/2}^t \phi_s dX_s$. Set
$$
\tau = \min\set{t\ge 1/2: v_t = g(B^H_{1/2})},\ \pi^1_t = \phi_s \ind{[1/2,\tau)}(t)
$$
Then it is possible to construct a self-financing portfolio $\Pi = (\pi^0,\pi^1)$ with $V_0^\Pi=0$. Clearly, $\pi^0_t=0, t\le 1/2$,
so $C_{1/2}^\Pi=0$. Further,
$$C_1^\Pi = C_{1/2}^\Pi + \int_{1/2}^1 \pi^1_s dX_s = \int_{1/2}^\tau \phi_s dX_s = g(B^H_{1/2}),$$
as  required.
\end{proof}
 If we  let $F$ to be the distribution function of some constant $A>0$ and observe that $B_1^{-1}$ is
greater than a non-random positive constant due to our assumption on $r_t$, then we derive the result about strong arbitrage.

Now we are ready to state main results of this section concerning hedging of contingent claims in the fractional $(B,S)$-market.
\begin{theorem}
Any contingent claim $\xi$ in the fractional $(B,S)$-market is weakly hedgeable. Moreover, its weak hedging cost can be any real number.
\end{theorem}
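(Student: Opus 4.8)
The plan is to reduce the claim to a representation statement for the discounted stock price $X$ and then rerun the construction of Theorem~\ref{thmimproper} with $X$ playing the role of $B^H$ and Lemma~\ref{mainlemmaX} that of Lemma~\ref{mainlemma}. First I pass to discounted quantities: since $r$ is bounded by a non-random constant, $B_t=\exp\set{\int_0^t r_s\,ds}$ is continuous and bounded away from $0$ and $\infty$ on $[0,1]$, so $\lim_{t\to1-}V_t^\Pi=\xi$ a.s.\ is equivalent to $\lim_{t\to1-}C_t^\Pi=\xi B_1^{-1}$ a.s., while $V_0^\Pi=C_0^\Pi$ because $B_0=1$. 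Put $\eta=\xi B_1^{-1}$, an $\F_1$-measurable random variable. As $dC_t^\Pi=\pi^1_t\,dX_t$, it suffices, for a given $x\in\R$, to build an $\mathbb F$-adapted $\pi^1$ with $\int_0^t\pi^1_s\,dX_s$ well defined in the sense of \eqref{intX} for every $t<1$ and $x+\int_0^t\pi^1_s\,dX_s\to\eta$ a.s.\ as $t\to1-$; the weak hedge $\Pi=(\pi^0,\pi^1)$ with risky part $\pi^1$, $V_0^\Pi=x$, self-financing on each $[0,t]$ with $t<1$, is then obtained as in the remark after Definition~\ref{d:port}.

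For the construction I take $z_t=\tan\mathbb E[\arctan\eta\mid\F_t]$, which (as in Theorem~\ref{thmimproper}) is $\mathbb F$-adapted with $z_t\to\eta$ a.s.\ as $t\to1-$, fix $t_0=0<t_1<\dots$ with $t_n\to1$ and $\Delta_n:=t_n-t_{n-1}\to0$, and set $\xi_0=x$, $\xi_n=z_{t_n}$, $\delta_n=\xi_n-\xi_{n-1}$. On each $(t_{n-1},t_n]$, exactly as in the proof of Theorem~\ref{thmimproper}, Lemma~\ref{mainlemmaX} provides an adapted $\phi^n=X^{-1}\varphi^n$, with $\varphi^n$ a copy of the process of Lemma~\ref{mainlemma} on this subinterval, such that $v^n_t:=\int_{t_{n-1}}^t\phi^n_s\,dX_s\to\infty$ as $t\to t_n-$. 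I put $\tau_n=\min\set{t\ge t_{n-1}:v^n_t=\abs{\delta_n}}$ and $\pi^1_t=\phi^n_t\,\sign\delta_n\,\ind{[t_{n-1},\tau_n]}(t)$ for $t\in(t_{n-1},t_n]$. As $t\mapsto v^n_t$ is continuous on $[t_{n-1},t_n)$ with $v^n_{t_{n-1}}=0$ and $v^n_t\to\infty$, we get $\tau_n<t_n$ and $v^n_{\tau_n}=\abs{\delta_n}$ a.s., hence $x_{t_n}:=x+\int_0^{t_n}\pi^1_s\,dX_s=x+\sum_{k=1}^n\delta_k=z_{t_n}$ for every $n$.

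Next I check that $x_t:=x+\int_0^t\pi^1_s\,dX_s\to\eta$ a.s.\ and that the integrability conditions of \eqref{intX} hold on $[0,t]$ for $t<1$. The latter is routine: $\pi^1_\cdot X_\cdot=\sum_n\varphi^n_\cdot\,\sign\delta_n\,\ind{[t_{n-1},\tau_n]}$ is bounded on $[0,1]$ (the $\varphi^n$ are bounded, with disjoint supports inside $(0,1)$), so $\pi^1_\cdot(\mu-r_\cdot)X_\cdot$ is Lebesgue integrable on $[0,t]$, and $\norm{\pi^1_\cdot X_\cdot}_{\alpha,t}<\infty$ for $t<1$ follows as in the proofs of Lemma~\ref{mainlemmaX}, Theorems~\ref{distrfmthm} and~\ref{thmimproper} (below any $t<1$ only finitely many blocks contribute, each by a finite amount, plus the cross terms treated as in Theorem~\ref{distrthm}). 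For the convergence, decompose $v^n_t=\int_{t_{n-1}}^t\varphi^n_s(\mu-r_s)\,ds+\sigma\int_{t_{n-1}}^t\varphi^n_s\,dB^H_s$; the second summand is nonnegative ($\sigma>0$, and by the It\^o formula $\int_{t_{n-1}}^t\varphi^n_s\,dB^H_s$ is a sum of powers $\abs{\cdot}^{1+\beta}$), while the first is bounded by $C\Delta_n$ because $\varphi^n$ and $r$ are bounded. Hence $v^n_t\ge-C\Delta_n$ on $[t_{n-1},\tau_n]$ and $v^n_t<\abs{\delta_n}$ for $t<\tau_n$, so $\abs{x_t-z_{t_{n-1}}}\le\abs{\delta_n}+C\Delta_n$ for $t\in(t_{n-1},t_n]$; as $\Delta_n\to0$, $\delta_n=z_{t_n}-z_{t_{n-1}}\to0$ and $z_{t_{n-1}}\to\eta$, this gives $x_t\to\eta$ a.s., and therefore $V_t^\Pi=x_t B_t\to\eta B_1=\xi$.

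The step I expect to be the main obstacle is exactly this last convergence: the integral with respect to $X$ carries a Lebesgue (drift) component, so $v^n$ need not stay nonnegative on its block, and the clean assertion from Theorem~\ref{thmimproper} that $x_t$ lies between $\xi_{n-1}$ and $\xi_n$ has to be weakened to the approximate version above. The point that makes it go through is that the building block $\varphi$ of Lemma~\ref{mainlemma} is bounded (its increments being capped by the stopping times in its definition), which renders the drift part of $v^n$ uniformly $O(\Delta_n)$ and hence negligible in the limit; all remaining estimates are copies of those in Theorems~\ref{thmimproper} and~\ref{distrfmthm} and Lemma~\ref{mainlemmaX}.
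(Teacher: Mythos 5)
Your proposal is correct and follows essentially the same route as the paper: the paper's proof is a two-line reduction to the construction of Theorem~\ref{thmimproper}, run with $X$ in place of $B^H$ via Lemma~\ref{mainlemmaX}, followed by undiscounting. You additionally work out a detail the paper glosses over --- the drift term in $dX$ means the block values $v^n_t$ are only approximately monotone, so the intermediate values $x_t$ lie between $\xi_{n-1}$ and $\xi_n$ only up to an $O(\Delta_n)$ error --- and your fix (boundedness of $\varphi^n$ makes the drift contribution uniformly negligible) is sound.
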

\begin{proof}
As in Theorem~\ref{thmimproper}, for any $V_0\in \R$ there is an $\mathbb F$-adapted process $\pi^1$ such that
$$
\int_0^t \pi^1_s dX_s \to \xi B_1^{-1} -V_0,\ t\to 1-.
$$
Then we can construct a self-financing portfolio $\Pi=(\pi^0,\pi^1)$ such that $V_0^\Pi = V_0$. We have
$$V_t^\Pi = B_t C_t^\Pi = B_t \left(V_0 + \int_0^t \pi_s^1 dX_s\right) \to \xi,\ t\to1-,$$
as required.
\end{proof}
\begin{theorem}
Assume that for a contingent claim $\xi$ there exists an $\mathbb F$-adapted almost surely H\"older continuous process $\set{z_t,t\in[0,1]}$
with $z_1 = \xi$. Then $\xi$ is hedgeable and its hedging cost can be any real number.
\end{theorem}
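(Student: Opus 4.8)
The plan is to transcribe the construction from the proof of Theorem~\ref{abreakthroughtheorem}, working with the discounted risky price $X_t=S_tB_t^{-1}$ in place of $B^H$, and to treat the Lebesgue (drift) part of the integral $\int\cdot\,dX$ as a harmless perturbation. Fix an arbitrary $V_0\in\R$. It suffices to construct an $\mathbb F$-adapted $\pi^1$ with $\norm{\pi^1_\cdot X_\cdot}_{\alpha,1}<\infty$ for some admissible $\alpha\in(1-H,(1-H+a)\wedge\tfrac12)$, with $\pi^1_\cdot(\mu-r_\cdot)X_\cdot$ Lebesgue integrable on $[0,1]$, and with $\int_0^1\pi^1_s\,dX_s=\xi B_1^{-1}-V_0$: then by the remark after Definition~\ref{d:port} there is a self-financing $\Pi=(\pi^0,\pi^1)$ with $V_0^\Pi=V_0$, and $V_1^\Pi=B_1C_1^\Pi=B_1\big(V_0+\int_0^1\pi^1_s\,dX_s\big)=\xi$, as required. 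Since $B_t^{-1}=\exp\{-\int_0^tr_s\,ds\}$ is a.s.\ Lipschitz and bounded away from $0$, the process $Z_t:=z_tB_t^{-1}$ is $\mathbb F$-adapted, a.s.\ $a$-H\"older continuous (we may assume $a<H$, as in Theorem~\ref{abreakthroughtheorem}), and $Z_1=\xi B_1^{-1}$; also $X$ is a.s.\ continuous and strictly positive, so $0<\inf_{[0,1]}X\le\sup_{[0,1]}X<\infty$ a.s.

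\emph{The construction.} Choose $\gamma,\kappa$ exactly as in the proof of Theorem~\ref{abreakthroughtheorem}, put $\Delta_n=n^{-\gamma}/\zeta(\gamma)$, $t_n=\sum_{k=1}^n\Delta_k$, and write $C_t:=V_0+\int_0^t\pi^1_s\,dX_s$. Define $\pi^1$ recursively on $(t_{n-1},t_n]$, aiming at $C_{t_n}=Z_{t_{n-1}}$. In Case A ($C_{t_{n-1}}=Z_{t_{n-2}}$) set $\sigma_0:=\sign(Z_{t_{n-1}}-C_{t_{n-1}})$ and $\pi^1_s:=n^{\kappa}X_s^{-1}\sigma_0\sign(B^H_s-B^H_{t_{n-1}})\ind{s\le\tau_n}$ for $s\in[t_{n-1},t_n)$, where $\tau_n$ is the first $t\ge t_{n-1}$ at which $\sigma_0(C_t-C_{t_{n-1}})$ reaches $\abs{Z_{t_{n-1}}-Z_{t_{n-2}}}$, capped at $t_n$; on $[t_{n-1},\tau_n]$ we have $\pi^1_sX_s\equiv n^{\kappa}\sigma_0\sign(B^H_s-B^H_{t_{n-1}})$, hence by the It\^o formula (Theorem~\ref{ito}), as in Theorem~\ref{abreakthroughtheorem},
\[
C_t-C_{t_{n-1}}=\sigma n^{\kappa}\sigma_0\abs{B^H_t-B^H_{t_{n-1}}}+D_t,\qquad \abs{D_t}\le Cn^{\kappa}\Delta_n .
\]
In Case B ($C_{t_{n-1}}\ne Z_{t_{n-2}}$) proceed as in the proof of Theorem~\ref{thmimproper}: using $\varphi^n$ from Lemma~\ref{mainlemma} rescaled to $[t_{n-1},t_n]$, put $\pi^1_s=X_s^{-1}\varphi^n_s\sign(Z_{t_{n-1}}-C_{t_{n-1}})\ind{s\le\tau_n}$ with $\tau_n$ the first time $\abs{C_t-C_{t_{n-1}}}=\abs{Z_{t_{n-1}}-C_{t_{n-1}}}$, which is $<t_n$ since $\int_{t_{n-1}}^t\varphi^n_s\,dB^H_s\to\infty$ and $\int_{t_{n-1}}^{t_n}\abs{\varphi^n_s}\,ds<\infty$; then $C_{t_n}=Z_{t_{n-1}}$.

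\emph{Borel--Cantelli and conclusion.} In Case A the displayed identity gives $\sigma_0(C_t-C_{t_{n-1}})\ge\sigma n^{\kappa}\abs{B^H_t-B^H_{t_{n-1}}}-Cn^{\kappa}\Delta_n$, so $\tau_n<t_n$ and $C_{t_n}=Z_{t_{n-1}}$ as soon as $\sup_{t\in[t_{n-1},t_n]}\abs{B^H_t-B^H_{t_{n-1}}}>\ell_n$, where $\ell_n=C(n^{-\kappa-\gamma a}+n^{-\gamma})=O\big(n^{-\min(\kappa+\gamma a,\gamma)}\big)$; the term $n^{-\gamma}$ is the only novelty relative to Theorem~\ref{abreakthroughtheorem} and originates from $D_t$. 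By the small ball estimate~\eqref{eq:m-r} and stationarity of increments, $\pr\big(\sup_{[0,\Delta_n]}\abs{B^H}\le\ell_n\big)\le\exp\{-c\,\Delta_n\ell_n^{-1/H}\}\le\exp\{-c'n^{\,e/H-\gamma}\}$ with $e=\min(\kappa+\gamma a,\gamma)$, and $e/H-\gamma>0$ because $\kappa>\gamma(H-a)$ and $H<1$. Summing and applying Borel--Cantelli, there is $N(\omega)$ such that Case A occurs and succeeds for all $n\ge N(\omega)$ while Case B occurs finitely often; hence $C_{t_n}=Z_{t_{n-1}}\to\xi B_1^{-1}$, and since $C_t$ stays within $O(\Delta_n^a)+O(n^{\kappa-\gamma})$ of $\set{Z_{t_{n-2}},Z_{t_{n-1}}}$ on $[t_{n-1},t_n]$, also $C_t\to\xi B_1^{-1}$, $t\to1-$. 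Finally, on Case A intervals $\pi^1_sX_s=\pm n^{\kappa}\sign(B^H_s-B^H_{t_{n-1}})\ind{s\le\tau_n}$, so Step~3 of the proof of Theorem~\ref{abreakthroughtheorem} applies verbatim (the governing series being $\sum_n n^{\kappa-\gamma}$ and $\sum_n n^{\kappa-\gamma(1-\alpha)}$, with Lemma~\ref{probfbm<0} controlling the sign-flip term), while the finitely many Case B intervals are handled as in Lemma~\ref{mainlemma}; moreover $\int_0^1\abs{\pi^1_s(\mu-r_s)X_s}\,ds\le C\sum_n n^{\kappa}\Delta_n=C\sum_n n^{\kappa-\gamma}<\infty$. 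Thus $\int_0^1\pi^1_s\,dX_s$ is well defined and equals $\lim_{t\to1-}C_t-V_0=\xi B_1^{-1}-V_0$, and the reduction yields a hedge with hedging cost $V_0^\Pi=V_0$, which was arbitrary.

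\emph{Main obstacle.} The only genuinely new point relative to Theorem~\ref{abreakthroughtheorem} is the control of the Lebesgue (drift) component of $\int\pi^1\,dX$: one must verify both that the drift it produces on each step $(t_{n-1},t_n]$, of size $\sim n^{\kappa}\Delta_n=n^{\kappa-\gamma}$, is negligible against the fractional-Brownian fluctuation scale $\sim n^{\kappa}\Delta_n^{H}=n^{\kappa-\gamma H}$ — which holds precisely because $H<1$, so the recursion still closes via Borel--Cantelli — and that it does not spoil the $\norm{\cdot}_{\alpha,1}$-bound. Everything else is a transcription of the earlier proof.
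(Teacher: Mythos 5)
Your proposal is correct and follows essentially the same route as the paper: transplant the construction of Theorem~\ref{abreakthroughtheorem} to the discounted price $X$, absorb the drift $\int(\mu-r_s)\pi^1_sX_s\,ds$ (of size $n^{\kappa}\Delta_n=n^{\kappa-\gamma}$ per step) as a perturbation that is negligible against the fluctuation scale $n^{\kappa}\Delta_n^{H}$ because $H<1$, and close the recursion by Borel--Cantelli. The only (harmless, arguably cleaner) deviation is in the norm estimate: you bound $\norm{\pi^1_\cdot X_\cdot}_{\alpha,1}$, which is exactly what the paper's definition \eqref{intX} of $\int\cdot\,dX$ requires and lets Step~3 of Theorem~\ref{abreakthroughtheorem} apply verbatim, whereas the paper estimates the norm of $\pi^1$ itself and therefore needs the additional H\"older continuity of $X^{-1}$.
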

\begin{proof}
As in the previous theorem, it is for enough to construct an $\mathbb F$-adapted process $\pi^1$ such that $\int_0^1 \pi^1_t dX_t$ is
well defined and
$$
V_0 + \int_0^1 \pi^1_t dX_t = \xi B_1^{-1}.
$$
To that end we slightly modify the construction from Theorem~\ref{abreakthroughtheorem}. Namely, we first take some $\gamma >(1-\alpha-H+a)^{-1}>1$ and put $\Delta_n = n^{-\gamma}/\zeta(\gamma)$,  $t_n = \sum_{k=1}^{n} \Delta_k$, $\Delta B^H_n = B^H_{t_{n}} - B^H_{t_{n-1}}$, $\xi_n = z_{t_n}B_{t_n}^{-1}$.
We will also use the notation $B^H_{x,y} = B^H_x - B^H_y$.

Then we construct process $\pi^1$ recursively on intervals $[t_n,t_{n+1})$, starting by setting $\pi^1_t=0$ on $[0,t_1)$. Then we take some $\kappa\in(\gamma(H-a),\gamma(1-\alpha)-1)$.

Denote $v_t = \int_0^t \pi^1_s dX_s$.  If $\psi$ is constructed up to $t_n$, we define $$\tau_n = \min\set{t\ge t_n: n^{\kappa}\abs{\int_{t_n}^t
(\mu -r_s)\sign B^H_{s,t_n}ds
+\sigma B^H_{t,t_n}} = \abs{v_{t_n}-\xi_n}}\wedge t_{n+1}$$ and set
$$\pi^1_t = n^\kappa \sign B^H_{t ,t_n} \sign(\xi_n-v_{t_n})\ind{t< \tau_n}X_t^{-1}.$$ for $t\in[t_n,t_{n+1})$.

The rest of proof is the same as in Theorem~\ref{abreakthroughtheorem}, so we do not repeat it fully, making only important remarks.
The Step 2 of proof will be still true, since $\int_{t_n}^t (\mu -r_s)\sign B^H_{s,t_n}ds$ is of order $(t-t_n)$
which is negligible compared to the quantities involved in this step. In the Step 3, we should not consider
expectations immediately. Instead, we note that $X^{-1}$ is almost surely H\"older continuous of any order less than $H$
and  estimate for $t,s\in[0,\tau_n) $
\begin{gather*}
\abs{\psi_s - \psi_t} \le C(\omega) n^\kappa\left(\ind{B^H_{t,t_n}B^H_{s,t_n}<0} + \abs{X^{-1}_t - X^{-1}_s}\right)\\
\le C(\omega) n^\kappa\left(\ind{B^H_{t,t_n}B^H_{s,t_n}<0} + (t - s)^d\right),
\end{gather*}
where $d\in(\alpha,H)$ is such that $\kappa - \gamma(1-d+H-\alpha)<-1$.  For other $t,s$ we write simply
$$
\abs{\psi_s - \psi_t}\le C(\omega).
$$
Then we take expectation only of the term involving $\ind{B^H_{t,t_n}B^H_{s,t_n}<0}$ (without the random constant) and we know that it is finite.
The rest of terms are easily checked to be finite exactly as in Theorem~\ref{abreakthroughtheorem}, and
multiplication by a random  constant cannot make things infinite. \end{proof}

\subsection{Zero integral}

Assume that process $g$ is adapted to $\mathbb F$, and
the integral $\int _0^1 g_sdB^H_s$ is well defined.
We are interested in the following question:
\begin{itemize}
 \item If $\int _0^1 g_u dB^H_u = 0 $ a.s., is it true that $g=0 $
almost everywhere with respect to $\mathbb P \otimes \lambda $?
\end{itemize}
Recall the following fact for standard Brownian motion $W$. Assume that
$
 \int _0^1 \mex{h_s^2} ds < \infty$. Then we have the following equivalence from
the It{\^o}-isometry
$$
\int _0^1 h_s dW_s =0 \Leftrightarrow h=0 \quad
\mathbb P \otimes \lambda \mbox{-a.e.}
$$
If the integrability assumption is replaced by $\int _0^1 h_s^2 ds < \infty$, then the conclusion is false:
it is proved in \cite{dudley} that one can construct an adapted process $h$ such that $\int_0^{1/2} h_s dW_s = 1$ and
$\int_{1/2}^1 h_s dW_s = -1$, so $h$ cannot equal zero identically.

Similarly, in the fractional Brownian framework, thanks to Theorem~\ref{distrthm} we can construct $g$ adapted to the filtration generated by the fractional Brownian motion $B^H$ such that $\int_0^{1/2} g_s dB^H_s = 1$ and
$\int_{1/2}^1 g_s dB^H_s = -1$. This gives a negative  answer to the question we are interested in.

Nevertheless, in some special cases we can conclude that the integrand $g$ is zero when the
integral $\int_0^1 g_s dB^H_s = 0$. One can show this for integrands with finite fractional chaotic expansion and for
simple predictable integrands.

First consider $g$ with a finite fractional Wiener--It\^o expansion. We give only
brief explanation here, the details can be found e.g.\ in \cite[Chapter 3]{biagini}.
Let $\phi(t,s)  = H(2H-1) \abs{t-s}^{2H-2}$ and define a scalar product of functions $f,g:[0,1]\to \R$
\begin{equation*}
\pair{f,g}_H = \int_{[0,1]^2} \phi(t,s)f(t)g(s)dt\,ds
\end{equation*}
and the corresponding norm $\norm{f}_H = \pair{f,f}^{1/2}_H$. The space $L^2_H([0,1])$ is the space of functions $f:[0,1]\to\R$
such that $\norm{f}_H<\infty$. Consider the $n$th symmetric tensor power of $L^2_H([0,1])$: $\wLH([0,1]^n) = L^2_H([0,1])^{\wotimes n}$.
It inherits a Hilbertian structure from $L_H^2([0,1])$: for $f,g\in \wLH([0,1]^n)$
\begin{multline*}
\pair{f,g}_{H}^2= \pair{f,g}_{\wLH([0,1]^n)}^2 = \\\int_{[0,1]^{2n}} f(t_1,\dots,t_n)g(s_1,\dots,s_n)\phi(t_1,s_1)\dots \phi(t_n,s_n) dt_1\dots dt_n\,ds_1\dots ds_n.
\end{multline*}
For a function $f\in \wLH([0,1]^n)$ it is possible to define the iterated stochastic integral
$$
I_n(f) = \int_{[0,1]^n} f(t_1,\dots,t_n) dB^H_{t_1}\wick\dots\wick dB^H_{t_n}
$$
(we use the symbol $\wick$ here to emphasize that this integral differs from the \emph{pathwise} iterated integral.)
Now let $g=\set{g_t,t\in[0,1]}$ have finite fractional chaos expansion of the form
$$
g_t=\sum_{k=0}^n I_{k}(f_{k}(\cdot,t))
$$
with $f_k(\cdot,t)\in \wLH([0,1]^k)$, $k\le n$. Thanks to finiteness of expansion, the process $g_t$ has a stochastic derivative
and
$$
D_s g_t = \sum_{k= 1}^{n} k I_{k-1}(f_{k}(\cdot,s,t)).
$$
Assume that  $f_k \in \wLH([0,1]^{k+1})$, $k\le n$. Then the process $g$ belongs to the domain of the divergence integral $\delta$ and
$$
\delta(g) =  \sum_{k= 1}^{n} I_{k+1}(\widetilde f_k),
$$
where $\widetilde f_k$ is the symmetrization of $f$.
Also assume that
$$
T(g) = \int_{[0,1]^2} \abs{D_s g_t} \phi(t,s) ds\, dt<\infty
$$
a.s. A checkable sufficient condition for this is
\begin{gather*}
\mex{T(g)^2} = \sum_{k=1}^n k! \int_{[0,1]^4}\pair{f_k(\cdot,s,t),f_k(\cdot,u,v)}_{H}\phi(t,s)\phi(u,v)ds\,dt\,du\,dv<\infty.
\end{gather*}
Then it is known (see \cite[Proposition 4.1]{biagini}) that there is a relation between pathwise and divergence integrals:
\begin{equation*}
%\label{forward-skor-relation}
\int_0^1 g_t dB^H_t =  \delta(g) + \int_{[0,1]^2}  D_s g_t \phi(s,t)ds\,dt.
\end{equation*}
If $\int_0^1 g_t dB^H_t=0$, then
\begin{equation}\label{rel1}
\delta(u) = - \int_{[0,1]^2} D_s u_t \phi(s,t)ds\, dt.
\end{equation}
The chaotic expansion of the right-hand side is
$$
- \int_{[0,1]^2} D_s u_t \phi(s,t)ds\, dt = \sum_{k=0}^{n-1} (k+1) I_{k}(h_k),
$$
where
$$h_k(t_1,\dots,t_{k}) = -\int_{[0,1]^2} f_{k+1}(t_1,\dots,t_{k},s,t)\phi(t,s)ds\,dt.$$
Since the Wiener--It\^o expansion is unique, the corresponding coefficients of the left-hand and the right-hand sides
of equality \eqref{rel1} are the same. In particular, taking $(n+1)$th terms of expansion, we get $I_{n+1}(\widetilde f_n) =0$ a.s.,
whence $\widetilde f_n=0$ $\lambda$-a.e., consequently, $f_n=0$ $\lambda$-a.e. Using a backward induction, we get
that $f_k = 0$ $\lambda$-a.e.\ for all $k\le n$, concluding that $g = 0$ $P \otimes \lambda$-a.e.

\smallskip

Assume now that $g $ is a simple predictable process of the form
\[
 g = \sum _{k=1} ^m \alpha _k\mathbf 1_{[t_{k-1}, t_k)} ,
\]
where $0 = t_0 < t_1 < \cdots < t_m = 1$, $\alpha_k \in \mathcal{F}_{t_{k-1}}$.
If
\[
\int_0^1 g_t dB^H_t  = \sum _{k=1}^m  \alpha _k \Delta B^H_{t_k} = 0,
\]
then it was proved in \cite[Theorem 2.5.1]{m} that $\alpha _k = 0  $ a.s. A proof uses
the representation of fractional Brownian motion with respect to standard Brownian motion on the finite interval considered in  \cite{nvv}.

\bibliographystyle{elsarticle-num}
\bibliography{integrfbm}
\end{document}